\makeatletter \@addtoreset{equation}{section}
\begin{document}

\newcommand{\E}{\mathbb{E}}
\newcommand{\PP}{\mathbb{P}}
\newcommand{\RR}{\mathbb{R}}

\newcommand{\Dt}{\D t}
\newcommand{\bX}{\bar X}
\newcommand{\bx}{\bar x}
\newcommand{\by}{\bar y}
\newcommand{\bp}{\bar p}
\newcommand{\bq}{\bar q}

\newtheorem{theorem}{Theorem}[section]
\newtheorem{lemma}[theorem]{Lemma}
\newtheorem{coro}[theorem]{Corollary}
\newtheorem{defn}[theorem]{Definition}
\newtheorem{assp}[theorem]{Assumption}
\newtheorem{expl}[theorem]{Example}
\newtheorem{prop}[theorem]{Proposition}
\newtheorem{rmk}[theorem]{Remark}

\newcommand\tq{{\scriptstyle{3\over 4 }\scriptstyle}}
\newcommand\qua{{\scriptstyle{1\over 4 }\scriptstyle}}
\newcommand\hf{{\textstyle{1\over 2 }\displaystyle}}
\newcommand\athird{{\scriptstyle{1\over 3 }\scriptstyle}}
\newcommand\hhf{{\scriptstyle{1\over 2 }\scriptstyle}}

\newcommand{\eproof}{\indent\vrule height6pt width4pt depth1pt\hfil\par\medbreak}

\def\a{\alpha} \def\g{\gamma}
\def\e{\varepsilon} \def\z{\zeta} \def\y{\eta} \def\o{\theta}
\def\vo{\vartheta} \def\k{\kappa} \def\l{\lambda} \def\m{\mu} \def\n{\nu}
\def\x{\xi}  \def\r{\rho} \def\s{\sigma}
\def\p{\phi} \def\f{\varphi}   \def\w{\omega}
\def\q{\surd} \def\i{\bot} \def\h{\forall} \def\j{\emptyset}

\def\be{\beta} \def\de{\delta} \def\up{\upsilon} \def\eq{\equiv}
\def\ve{\vee} \def\we{\wedge}

\def\F{{\cal F}}
\def\T{\tau} \def\G{\Gamma}  \def\D{\Delta} \def\O{\Theta} \def\L{\Lambda}
\def\X{\Xi} \def\S{\Sigma} \def\W{\Omega}
\def\M{\partial} \def\N{\nabla} \def\Ex{\exists} \def\K{\times}
\def\V{\bigvee} \def\U{\bigwedge}

\def\1{\oslash} \def\2{\oplus} \def\3{\otimes} \def\4{\ominus}
\def\5{\circ} \def\6{\odot} \def\7{\backslash} \def\8{\infty}
\def\9{\bigcap} \def\0{\bigcup} \def\+{\pm} \def\-{\mp}
\def\[{\langle} \def\]{\rangle}

\def\proof{\noindent{\it Proof. }}
\def\tl{\tilde}
\def\trace{\hbox{\rm trace}}
\def\diag{\hbox{\rm diag}}
\def\for{\quad\hbox{for }}
\def\refer{\hangindent=0.3in\hangafter=1}

\newcommand\wD{\widehat{\D}}

\thispagestyle{empty}

\title{
\bf The Truncated Milstein Method for Stochastic Differential
 Equations}

\author{
{\bf Qian Guo${}^1$\thanks{E-mail: qguo@shnu.edu.cn}, Wei Liu${}^1$\thanks{Corresponding author. E-mail: weiliu@shnu.edu.cn, lwbvb@hotmail.com},
Xuerong Mao${}^2$\thanks{E-mail: x.mao@strath.ac.uk},
Rongxian Yue${}^1$\thanks{E-mail: yue2@shnu.edu.cn}}
\\
${}^1$ Department of Mathematics, \\
Shanghai Normal University, Shanghai, China. \\
${}^2$ Department of Mathematics and Statistics, \\
University of Strathclyde, Glasgow G1 1XH, U.K.
}
\date{}

\maketitle

\begin{abstract}
Inspired by the truncated Euler-Maruyama method developed in Mao (J. Comput. Appl. Math. 2015), we propose the truncated Milstein method in this paper. The strong convergence rate is proved to be close to 1 for a class of highly non-linear stochastic differential equations. Numerical examples are given to illustrate the theoretical results.

\medskip \noindent
{\small\bf Key words: } Strong convergence rate, Non-linear stochastic differential equations, Truncated Milstein method, Non-global Lipschitz condition.

\medskip \noindent
{\small\bf MAS Classification (2000):}  65C20

\end{abstract}

\section{Introduction}

Stochastic differential equation (SDE), as a power tool to model uncertainties, has been broadly applied to many areas \cite{ALL2007a,Oks2003a,M01}. However, apart from linear SDEs, explicit solutions to most non-linear SDEs can hardly be found. Therefore, numerical approximations to SDEs become essential in the applications of SDE models.
\par
When the drift and diffusion coefficients of SDEs satisfy the global Lipschitz condition, different kinds of numerical approximates have been broadly studied. We refer the readers to the monographs \cite{K01,Mil02,Pla2010} for the detailed introductions and discussions.
\par
Due to the simple structure and easy to programme, explicit methods, such as the Euler-Maruyama method, have been widely used \cite{Higham2011}. But when the global Lipschitz condition is disturbed, the classical Euler-Maruyama method has been proved divergent \cite{Hut01,HJK2013}.
\par
One of the natural candidates to tackle the divergence caused by the non-linearities in coefficients is implicit method. Many works have been devoted to implicit methods \cite{Mil01,Hu01,H01,Szp11,MS2013a,NS2014,YHJ2014,WGW2012,DNS2014,BT2004,ACR2009,Schurz2012}. Despite the good performance of the strong convergence, implicit methods have their own disadvantage that some non-linear systems need to be solved in each iteration, which may be computationally expensive and introduce some more errors.
\par
Another way to tackle SDEs with non-global Lipschitz coefficients is to modify the drift and diffusion coefficients in the numerical methods. Following this approach, one can construct explicit methods that are able to converge to SDEs with coefficients allowed to grow super-linearly. The tamed Euler method \cite{Hut02,HJ2015} is one of the most popular explicit methods that were developed particularly for the super-linear SDEs. In addition, we refer the readers to \cite{Sab13,Gan01,ZWH2014} for the simplified proofs of the strong convergence for the tamed Euler method, the tamed Milstein method and the semi-tamed method, respectively.
\par
More recently, Mao in \cite{M15} proposed a new explicit method called the truncated Euler-Maruyama method. The new method focuses on those SDEs with both the drift and diffusion coefficients allowed to grow super-linearly. In \cite{Mao2016}, Mao further proved that the strong convergence rate of the method could be arbitrarily close to a half. Mao and his collaborators also studied the asymptotic behaviour of the method in \cite{GLMY2017}.
\par
Apart from the stand-alone research interests of the strong convergence of numerical methods, the property of the strong convergence could also be used to improve the convergence rate of estimating the expectation of some random variable by using the Multi-level Monte Carlo (MLMC) method \cite{Giles2008a}. Furthermore, Giles in \cite{Giles2008b} pointed out that a numerical method with the strong convergence rate of one could better cooperate with the MLMC method.
\par
Therefore, in this paper we propose the truncated Milstein method, which is an explicit method and has the strong convergence rate of arbitrarily closing to one. In this work, both of the drift and diffusion coefficients of the SDEs under investigation could grow super-linearly.
\par
This paper is organized as follows. Notations, assumptions and the truncated Milstein method will be introduced in Section \ref{mathpre}. The proofs of the main results will be presented in Section \ref{secMain}. An example together with some ideas on further research will be presented in Section \ref{secexpl}.

\section{Mathematical Preliminaries}\label{mathpre}

Throughout this paper, unless otherwise specified, let $(\Omega , \F, \PP)$ be a complete probability space with a filtration $\left\{\F_t\right\}_{t \ge 0}$ satisfying the usual conditions (that is, it is right continuous and increasing while $\F_0$ contains all $\PP$-null sets). Let $\E$ denote the expectation corresponding to $\PP$.
If $A$ is a vector or matrix, its transpose is denoted by $A^T$.
Let $B(t) = (B^1(t), B^2(t), ..., B^m(t))^T$ be
an $m$-dimensional Brownian motion defined on the space.
If $A$ is a matrix, let $|A| = \sqrt{\trace(A^TA)}$ be its trace norm.
If $x \in \RR^d$, then $|x|$ is the Euclidean norm.
For two real numbers $a$ and $b$, set $a\ve b=\max(a,b)$
and $a\we b=\min(a,b)$. If $G$ is a set, its indicator function is denoted by $I_G$, namely $I_G(x)=1$ if $x\in G$ and $0$ otherwise.
\par
Consider a $d$-dimensional SDE
\begin{equation}\label{sde}
dx(t) = f(x(t))dt + \sum_{j = 1}^m g_j(x(t))dB^{j}(t)
\end{equation}
on $t\ge 0$ with the initial value $x(0)=x_0\in\RR^d$, where
$$
f: \RR^d \to \RR^d,
\quad
g_j: \RR^d \to \RR^{d},  ~j=1,2,...,m,
$$
and $ x(t)= (x^1(t), x^2(t), ..., x^d(t))^T$.
\par
In some of the proofs in this paper, we need the more specified notation that $f=(f_1, f_2, ..., f_d)^T$, $f_i: \RR^d \rightarrow \RR$ for $i=1,2,...,d$, and $g_j = (g_{1,j}, g_{2,j},...,g_{d,j})^T$, $g_{i,j}: \RR^d \rightarrow \RR$ for $j=1,2,...,m$.
\par
For $j_1, j_2 = 1,...,m$, define
\begin{equation}\label{Lg}
L^{j_1}g_{j_2}(x) = \sum_{l=1}^d g_{l,j_1}(x)\frac{\partial g_{j_2} (x)}{\partial x^l} .
\end{equation}
\par
For the truncated Milstein method, we need that both $f$ and $g$ have continuous second-order derivatives. In addition, the following assumptions are imposed.

\begin{assp}\label{fgpoly}
There exist constants $K_2 > 0$ and $r > 0$ such that
\begin{equation*}
|f(x) - f(y)| \ve |g_j(x) - g_j(y)| \ve \left| L^{j_1} g_{j_2}(x) -  L^{j_1} g_{j_2}(y) \right|\leq K_2 (1 + |x|^r + |y|^r) |x - y|
\end{equation*}
for all $x,y \in \RR^d$ and $j, j_1, j_2=1,2,...m$.
\end{assp}

\begin{assp}\label{KhasminskiiCond}
For every $p\ge 1$, there exists a positive constant $K_1$, dependent on $p$, such that
\begin{equation*}
\left\[ x - y, f(x) - f(y) \right\] + (2p-1) \sum_{j= 1}^m \left|g_{j}(x) - g_j(y) \right|^2 \leq K_1 |x - y|^2
\end{equation*}
for all $x,y \in \RR^d$.
\end{assp}
\par \noindent
Assumptions \ref{fgpoly} and \ref{KhasminskiiCond} guarantee that the SDE (\ref{sde}) has a unique global solution.
\par
It is not hard to derive from Assumption \ref{KhasminskiiCond} that for all $x \in \RR^d$
\begin{equation}\label{KhasminskiiCondext}
\left\[ x, f(x) \right\] + (2p-1) \sum_{j= 1}^m \left| g_{j}(x) \right|^2 \leq \alpha_1(1 + |x|^2)
\end{equation}
holds for all $p\geq 1$, where $\alpha_1$ is a positive constant dependent on $p$.
\par
Moreover, Assumption \ref{KhasminskiiCond} guarantees the boundedness of
the moments of the underlying solution \cite{M01}, namely,  there exists a positive constant $K$, dependent on $t$ and $p$, such that
 \begin{equation}\label{sdepthmoment}
\E |x(t)|^{2p} \leq K \left( 1 + |x(0)|^{2p} \right).
\end{equation}
\par \noindent
From Assumption \ref{fgpoly} we can obtain that for all $x \in \RR^d$
\begin{equation}\label{fgpolyext}
|f(x)| \ve |g_j(x)| \leq \alpha_2( 1 + |x|^{r+1}),~j=1,2,...m,
\end{equation}
where $\alpha_2$ is a positive constant.
\par \noindent
For $l= 1,2,...d$, set
\begin{equation*}
f'_l(x) = \left( \frac{\partial f_l(x)}{\partial x^{1}}, \frac{\partial f_l(x)}{\partial x^{2}}, ..., \frac{\partial f_l(x)}{\partial x^{d}} \right)~\text{and}~f''_l(x) = \left( \frac{\partial^2 f_l(x) }{\partial x^j \partial x^i} \right)_{i,j},~i,j=1,2,...,d.
\end{equation*}
And for $n= 1,2,...m$, $l = 1,2,...,d$, set
\begin{equation*}
g'_{l,n}(x) = \left( \frac{\partial g_{l,n}(x)}{\partial x^{1}}, \frac{\partial g_{l,n}(x)}{\partial x^{2}}, ..., \frac{\partial g_{l,n}(x)}{\partial x^{d}} \right)~\text{and}~g''_{l,n}(x) = \left( \frac{\partial^2 g_{l,n}(x) }{\partial x^j \partial x^i} \right)_{i,j},~i,j=1,2,...,d.
\end{equation*}
We further assume that for $n= 1,2,...m$ and $l = 1,2,...,d$, there exists a positive constant $\alpha_3$ such that
\begin{equation}\label{dfgspoly}
|f_l'(x)| \ve |f_l''(x)| \ve |g_{l,n}'(x)| \ve |g_{l,n}''(x)| \leq \alpha_3  (1 + |x|^{r+1} ).
\end{equation}

\subsection{The Classical Milstein Method}
Define a uniform mesh $\mathcal{T}^N: 0=t_0<t_1<\cdots<t_N=T$ with $t_k=k\Delta$, where $\Delta=T/N$ for $N\in \mathbb{N}$, the classical Milstein method  \cite{Mil1974} is

\begin{equation*}
y_{k+1}= y_k + f(y_k) \Delta + \sum_{j= 1}^m g_j(y_k) \Delta B^j_k + \sum_{j_1= 1}^m \sum_{j_2 = 1}^m L^{j_1}g_{j_2} (y_k) I_{j_1,j_2}^{t_k,t_{k+1}},
\end{equation*}
where
\begin{equation*}
L^{j_1} = \sum_{l=1}^d g_{l,j_1}  \frac{\partial}{\partial x^l} ~\text{and} ~ I_{j_1,j_2}^{t_k,t_{k+1}} = \int_{t_k}^{t_{k+1}} \int_{t_k}^{s_2} d B^{j_1}(s_1) d B^{j_2}(s_2).
\end{equation*}
\par \noindent
When the diffusion coefficient $g$ satisfies the commutativity condition that
\begin{equation*}
L^{j_1} g_{l,j_2} = L^{j_2} g_{l,j_1}, ~\text{for}~j_1,j_2=1,...,m ~ \text{and} ~ l = 1,...,d,
\end{equation*}
the classical Milstein method is simplified into
\begin{equation*}
y_{k+1}= y_k + f(y_k) \Delta + \sum_{j= 1}^m g_j(y_k) \Delta B^j_k + \frac{1}{2} \sum_{j_1= 1}^m \sum_{j_2 = 1}^m L^{j_1}g_{j_2} (y_k) \Delta B_k^{j_1} \Delta B_k^{j_2} - \frac{1}{2} \sum_{j= 1}^m L^{j}g_{j} (y_k) \Delta,
\end{equation*}
where the property, $I_{j_1,j_2}^{t_k,t_{k+1}} + I_{j_2,j_1}^{t_k,t_{k+1}} = \Delta B_k^{j_1} \Delta B_k^{j_2}$ for $j_1 \ne j_2$, is used.
\par
In this paper, we only consider the case of the commutative diffusion coefficient. For the case of the non-commutative diffusion coefficient, the truncated Milstein method may still be applicable. But more complicated notations and new techniques will be involved. Due to the length of the paper, we will report the more general case in the future work.

\subsection{The Truncated Milstein Method}
For $j = 1, ..., m$ and $l = 1, ..., d$, define the derivative of the vector $g_j(x)$ with respect to $x^l$ by
\begin{equation*}
G_j^{l}(x):=\frac{\partial}{\partial x^l} g_j(x) = \left( \frac{\partial g_{1,j}(x) }{\partial x^l}, \frac{\partial g_{2,j}(x) }{\partial x^l}, ...,\frac{\partial g_{d,j}(x) }{\partial x^l}\right)^T.
\end{equation*}
\par
To define the truncated Milstein method, we first choose a strictly increasing continuous function $\mu : \RR_{+} \rightarrow \RR_{+}$ such that $\mu(u) \rightarrow \infty$ as $u \rightarrow \infty$ and
\begin{equation}\label{formofmu}
\sup_{|x|\leq u} (|f(x)| \ve |g_j(x)| \ve |G_j^{l}(x)| ) \leq \mu(u)
\end{equation}
for any $u \geq 2$, $j = 1, ..., m$ and $l = 1, ..., d$.
\par
Denote the inverse function of $\mu$ by $\mu^{-1}$. We see that $\mu^{-1}$ is a strictly increasing continuous function from $[\mu(0),+\infty)$ to $\RR_{+}$. We also choose a number $\Delta^* \in (0,1]$ and a strictly decreasing function $h: (0,\Delta^*] \rightarrow (0,+\infty)$ such that
\begin{equation}\label{deltahdelta}
h(\Delta^*) \geq \mu(1),~~\lim_{\Delta \rightarrow 0}h(\Delta) = \infty ~~\text{and}~~\Delta^{1/4} h(\Delta) \leq 1,~~ \forall \Delta \in (0,\D^*].
\end{equation}
\par
For a given step size $\Delta \in (0,1)$ and any $x \in \RR^d$, define the truncated functions by
\begin{equation}\label{DefnTrunc1}
\tilde{f}(x) = f\left( (|x| \we \mu^{-1}(h(\Delta)) ) \frac{x}{|x|} \right),
\end{equation}
\begin{equation}\label{DefnTrunc2}
\tilde{g}_j(x) = g_j\left( (|x| \we \mu^{-1}(h(\Delta)) ) \frac{x}{|x|} \right), ~j=1,2,...m,
\end{equation}
and
\begin{equation}\label{DefnTrunc3}
 \tilde{G}_j^{l}(x) = G_j^{l}\left( (|x| \we \mu^{-1}(h(\Delta)) ) \frac{x}{|x|} \right),~j = 1, ..., m,~l = 1, ..., d,
\end{equation}
where we set $x/|x| = 0$  if $x = 0$.  It is not hard to see that for any $x \in \RR^d$
\begin{equation}\label{hdeltabdfgG}
|\tilde{f}(x)| \ve |\tilde{g}_j(x)| \ve |\tilde{G}_j^{l}(x)| \leq \mu(\mu^{-1}(h(\Delta))) = h(\Delta).
\end{equation}
That is to say, all the truncated functions $\tilde{f}$, $\tilde{g}$ and $\tilde{G}_j^{l}$ are bounded although $f$, $g$ and $G_j^{l}$ may not. The next lemma illustrates that those truncated functions preserve (\ref{KhasminskiiCondext}) for all $\Delta \in (0, \Delta^*]$.
\begin{lemma}
Assume that (\ref{KhasminskiiCondext}) holds.  Then, for all $\Delta \in (0, \Delta^*]$ and any $x \in \RR^d$,
	\begin{equation}\label{truncatedKhasminskiiCondext}
	\left\[ x, \tilde{f}(x) \right\] + (2p-1) \sum_{j= 1}^m \left| \tilde{g}_{j}(x) \right|^2 \leq 2\alpha_1(1 + |x|^2).
	\end{equation}
\end{lemma}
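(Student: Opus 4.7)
The plan is to split into two cases depending on whether the truncation is active. Writing $\bar x := (|x|\wedge \mu^{-1}(h(\D)))\, x/|x|$, so that $\tilde f(x)=f(\bar x)$ and $\tilde g_j(x)=g_j(\bar x)$, the first case is the trivial one: if $|x|\le \mu^{-1}(h(\D))$ then $\bar x=x$, and the claim follows directly from (\ref{KhasminskiiCondext}) with the constant $\alpha_1$, which is already stronger than the bound $2\alpha_1(1+|x|^2)$ asserted.

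The interesting case is $|x|>\mu^{-1}(h(\D))$, where $\bar x = \mu^{-1}(h(\D))\, x/|x|$ and $|\bar x| = \mu^{-1}(h(\D))$. Because $h$ is strictly decreasing, $\D\le\D^{*}$ gives $h(\D)\ge h(\D^{*})\ge \mu(1)$, and since $\mu^{-1}$ is increasing this yields $|\bar x|\ge 1$. Moreover $|\bar x|<|x|$ by the case assumption, and $x$ and $\bar x$ are collinear with the same direction, so $x=\lambda \bar x$ with $\lambda:=|x|/|\bar x|>1$. Consequently
\begin{equation*}
\langle x,\tilde f(x)\rangle+(2p-1)\sum_{j=1}^{m}|\tilde g_j(x)|^2
=\lambda\langle \bar x,f(\bar x)\rangle + (2p-1)\sum_{j=1}^{m}|g_j(\bar x)|^2.
\end{equation*}
The decisive algebraic trick is to rewrite this expression as
\begin{equation*}
\lambda\Bigl[\langle \bar x,f(\bar x)\rangle+(2p-1)\sum_{j=1}^{m}|g_j(\bar x)|^2\Bigr] - (\lambda-1)(2p-1)\sum_{j=1}^{m}|g_j(\bar x)|^2,
\end{equation*}
whose second term is nonpositive (since $\lambda>1$) and can be discarded, while the first term is controlled by applying (\ref{KhasminskiiCondext}) at the point $\bar x$.

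This leaves the estimate $\lambda\,\alpha_1(1+|\bar x|^2) = \alpha_1\bigl(|x|/|\bar x|+|x|\,|\bar x|\bigr)$. Here the final step uses the two inequalities $|x|/|\bar x|\le |x|$ (because $|\bar x|\ge 1$) and $|x|\,|\bar x|\le |x|^2$ (because $|\bar x|<|x|$), followed by the elementary bound $|x|\le 1+|x|^2$. The upshot is $\alpha_1(|x|+|x|^2)\le 2\alpha_1(1+|x|^2)$, matching the claim. I do not expect a real obstacle: the only nontrivial point is recognising the decomposition that isolates the Khasminskii quantity at $\bar x$ and makes the sign of $\lambda-1$ work in our favour, together with the observation that the lower bound $|\bar x|\ge 1$ is guaranteed by the compatibility condition $h(\D^{*})\ge\mu(1)$ built into (\ref{deltahdelta}).
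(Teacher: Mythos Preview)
Your proposal is correct and follows essentially the same approach as the proof of Lemma~2.4 in \cite{M15}, which the paper cites in lieu of its own proof: the case split on whether $|x|\le\mu^{-1}(h(\Delta))$, the rescaling $x=\lambda\bar x$ with $\lambda>1$ in the nontrivial case, the decomposition that isolates the Khasminskii quantity at $\bar x$ and discards the nonpositive $-(\lambda-1)(2p-1)\sum_j|g_j(\bar x)|^2$ term, and the final use of $|\bar x|\ge 1$ (guaranteed by $h(\Delta^*)\ge\mu(1)$) together with $|\bar x|<|x|$. You have also correctly identified that the weaker condition $h(\Delta^*)\ge\mu(1)$ (rather than $\mu(2)$) suffices, exactly as the paper remarks.
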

The proof of this lemma is the same as that of Lemma 2.4 in \cite{M15}, so we omit it here. We should of course point out that
it was required that $h(\Delta^*) \geq \mu(2)$ in \cite{M15}, but we observe that the proof of Lemma 2.4 in \cite{M15}  still works if
$h(\Delta^*) \geq \mu(1)$ and that is why in this paper we only impose $h(\Delta^*) \geq \mu(1)$ as stated in (\ref{deltahdelta}).

\par \noindent
The truncated Milstein method is defined by
\begin{eqnarray}\label{theTMmethod}
Y_{k+1} \nonumber &=& Y_{k} + \tilde{f}(Y_{k}) \Delta + \sum_{j=1}^m \tilde{g}_j(Y_{k}) \Delta B^j_k \nonumber \\
&&+ \frac{1}{2} \sum_{j_1= 1}^m \sum_{j_2 = 1}^m \sum_{l=1}^d \tilde{g}_{l,j_1} (Y_k) \tilde{G}_{j_2}^{l}(Y_k) \Delta B_k^{j_2} \Delta B_k^{j_1} - \frac{1}{2} \sum_{j= 1}^m \sum_{l=1}^d \tilde{g}_{l,j} (Y_k) \tilde{G}_j^{l} (Y_k) \Delta.
\end{eqnarray}
To simplify the notation, we set
\begin{equation*}
L^{j_1}\tilde{g}_{j_2} (x) :=  \sum_{l=1}^d \tilde{g}_{l,j_1} (x) \tilde{G}_{j_2}^{l}(x).
\end{equation*}
The continuous version of the truncated Milstein method is defined by
\begin{equation}\label{continuousTM}
	Y(t) = \bar{Y}(t) + \int_{t_k}^{t} \tilde{f}(\bar{Y}(s))ds + \sum_{j=1}^m \int_{t_k}^{t} \tilde{g}_j(\bar{Y}(s)) dB^j(s) + \sum_{j_1=1}^m \int_{t_k}^t \sum_{j_2=1}^m L^{j_1}\tilde{g}_{j_2} (\bar{Y}(s)) \Delta B^{j_2}(s)  d B^{j_1}(s),
\end{equation}
where $\bar{Y}(t) = Y_k$ for $t_k \leq t < t_{k+1}$ and $\Delta B^{j_2}(s) = \sum_{k=0}^{\infty} I_{\{t_k \leq s < t_{k+1} \}} (B^{j_2}(s) - B^{j_2}(t_k))$.

\subsection{Boundedness of the Moments}
It is obvious from (\ref{hdeltabdfgG}) that for any $T > 0$
\begin{equation*}
\sup_{0 \leq t \leq T} \E \left| Y(t) \right|^{2p} <\infty.
\end{equation*}
However, it is not so  clear that for any $T > 0$
\begin{equation*}
\sup_{0< \Delta \leq \Delta^*}\sup_{0 \leq t \leq T} \E \left| Y(t) \right|^{2p} <\infty.
\end{equation*}
This is what we are going to prove in this subsection. Firstly, we show that $Y(t)$ and $\bar{Y}(t)$ are close to each other.
\begin{lemma}\label{YYbar}
	For any $\Delta \in (0,\Delta^{*}]$, any $t \geq 0$ and any $p \geq 1$,
	\begin{equation*}
	\E |Y(t) - \bar{Y}(t)|^{2p} \leq c \Delta^{p}(h(\Delta))^{2p},
	\end{equation*}
	where $c$ is a positive constant independent of $\Delta$. Consequently, for any $t \geq 0$
	\begin{equation*}
	\lim_{\Delta \rightarrow 0} \E |Y(t) - \bar{Y}(t)|^{2p} = 0.
	\end{equation*}
\end{lemma}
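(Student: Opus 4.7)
The plan is to decompose $Y(t) - \bar Y(t)$ according to the three integrals in (\ref{continuousTM}) and bound each contribution separately in $L^{2p}$. For any $t \in [t_k, t_{k+1})$, the length of integration is at most $\Delta$, which is the small parameter driving everything.

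For the drift integral, Hölder's inequality in time combined with the uniform bound $|\tilde f(\bar Y(s))| \leq h(\Delta)$ from (\ref{hdeltabdfgG}) yields a contribution of order $\Delta^{2p} h(\Delta)^{2p}$. For the diffusion integral, I would apply the Burkholder--Davis--Gundy (BDG) inequality together with $|\tilde g_j| \leq h(\Delta)$; this produces the expected bound of order $\Delta^{p} h(\Delta)^{2p}$, which I expect to be the dominant term.

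The real work is the Milstein correction integral. Since $L^{j_1}\tilde g_{j_2}(x) = \sum_{l=1}^d \tilde g_{l,j_1}(x)\tilde G^{l}_{j_2}(x)$ and every factor is bounded in modulus by $h(\Delta)$, we get $|L^{j_1}\tilde g_{j_2}| \leq d\, h(\Delta)^2$. Applying BDG to $\int_{t_k}^{t} L^{j_1}\tilde g_{j_2}(\bar Y(s))\,\Delta B^{j_2}(s)\,dB^{j_1}(s)$, then Hölder in time, and finally the standard Brownian increment moment estimate $\E|\Delta B^{j_2}(s)|^{2p} \leq C_p (s-t_k)^{p} \leq C_p \Delta^{p}$, produces a bound of order $\Delta^{2p} h(\Delta)^{4p}$. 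To absorb this into the target, I would invoke the calibration $\Delta^{1/4} h(\Delta) \leq 1$ from (\ref{deltahdelta}): this gives $\Delta^{p} h(\Delta)^{2p} \leq \Delta^{p/2} \leq 1$, and hence $\Delta^{2p} h(\Delta)^{4p} = \bigl(\Delta^{p} h(\Delta)^{2p}\bigr)^2 \leq \Delta^{p} h(\Delta)^{2p}$. Summing the three contributions and collecting constants gives the desired inequality.

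The consequence follows immediately, since $h(\Delta) \leq \Delta^{-1/4}$ forces $\Delta\, h(\Delta)^2 \leq \Delta^{1/2} \to 0$ as $\Delta \to 0$. The main obstacle is the Milstein term: the additional factor of $h(\Delta)$ coming from the product structure of $L^{j_1}\tilde g_{j_2}$ must be offset by the smallness of the Brownian increment $\Delta B^{j_2}(s)$, and the exponent balance works out precisely because of the condition $\Delta^{1/4} h(\Delta) \leq 1$ in (\ref{deltahdelta}). In other words, the choice of this calibration in the definition of the scheme is what makes the Milstein correction sit at the same order as the diffusion one-step error rather than dominating it.
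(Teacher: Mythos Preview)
Your proposal is correct and matches the paper's proof essentially step for step: the same three-term decomposition, H\"older on the drift integral, a BDG/moment inequality on the two stochastic integrals, the bound $|L^{j_1}\tilde g_{j_2}| \le d\,h(\Delta)^2$, and the use of $\Delta^{1/4}h(\Delta)\le 1$ to absorb the Milstein contribution $\Delta^{2p}h(\Delta)^{4p}$ into $\Delta^{p}h(\Delta)^{2p}$. The only cosmetic difference is that the paper cites Theorem~7.1 of \cite{M01} (an $L^{2p}$ moment bound for It\^o integrals) in place of an explicit BDG-plus-H\"older step, which yields the same estimate.
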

\begin{proof}
Fix the step size $\Delta \in (0,\Delta^{*}]$ arbitrarily.
For any $t \geq 0$,  there exists a unique integer $k \geq 0$ such that $t_k \leq t < t_{k+1}$. By the elementary inequality $|\sum_{i=1}^m a_i|^{2p} \leq m^{2p-1} \sum_{i=1}^m |a_i|^{2p} $, we derive from (\ref{continuousTM})   that
\begin{eqnarray*}
\E |Y(t) - \bar{Y}(t)|^{2p} &\leq& c \E \bigg( \left| \int_{t_k}^{t} \tilde{f}(\bar{Y}(s))ds \right|^{2p} + \left| \sum_{j=1}^m \int_{t_k}^{t} \tilde{g}_j(\bar{Y}(s)) dB^j(s) \right|^{2p} \\ \nonumber
&&+ \left| \sum_{j_1=1}^m \int_{t_k}^t  \sum_{j_2=1}^m L^{j_1}\tilde{g}_{j_2} (\bar{Y}(s)) \Delta B^{j_2}(s)  d B^{j_1}(s) \right|^{2p} \bigg),
\end{eqnarray*}
where $c$ is a positive constant independent of $\Delta$ that may change from line to line.
Then by the elementary inequality, the H\"older inequality and Theorem 7.1 in \cite{M01} (Page 39), we have
\begin{eqnarray*}
\E |Y(t) - \bar{Y}(t)|^{2p}
&\leq& c \bigg(  \Delta^{2p-1} \E \int_{t_k}^{t} \left| \tilde{f}(\bar{Y}(s)) \right|^{2p} ds  + \Delta^{(2p-2)/2} \sum_{j=1}^m \E \int_{t_k}^{t} \left| \tilde{g}_j(\bar{Y}(s)) \right|^{2p} ds  \\ \nonumber
&&+  \Delta^{(2p-2)/2} \E \sum_{j_1=1}^m \sum_{j_2=1}^m  \int_{t_k}^t  \left| L^{j_1}\tilde{g}_{j_2} (\bar{Y}(s)) \right|^{2p} |\Delta B^{j_2}(s)|^{2p}  ds  \bigg).
\end{eqnarray*}
Applying (\ref{hdeltabdfgG}) and the fact that $\E |\Delta B^{j_2}(s)|^{2p} \leq c \Delta^p$ for $s\in [t_k,t_{k+1})$, we obtain
\begin{equation*}
\E |Y(t) - \bar{Y}(t)|^{2p}
\leq c \left( \Delta^{2p} (h(\Delta))^{2p} + \Delta^{p}(h(\Delta))^{2p} + \Delta^{2p} (h(\Delta))^{4p} \right).
\end{equation*}
By (\ref{deltahdelta}), we see $\Delta^{p} (h(\Delta))^{2p} \leq \Delta^{p/2}$. Therefore, the assertion holds. \eproof
\end{proof}
Now we are ready to establish the boundedness of moments of the truncated Milstein approximate solution.
\begin{lemma}\label{TMbound}
Let (\ref{KhasminskiiCondext}) hold. Then for any $\Delta \in (0, \Delta^*]$ and any $T > 0$
\begin{equation*}
\sup_{0 < \Delta \leq \Delta^*}\sup_{0 \leq t \leq T}\E |Y(t)|^{2p} \leq K \left( 1 + \E |Y(0)|^{2p} \right),
\end{equation*}
where K is a positive constant dependent on $T$ but independent of $\Delta$.
\end{lemma}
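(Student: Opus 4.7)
The plan is to apply It\^o's formula to $|Y(t)|^{2p}$ using the continuous-time representation (2.17), then exploit the truncated Khasminskii-type bound (2.16) on the ``principal'' part evaluated at $\bar Y(s)$, and finally absorb the ``correction'' pieces using Lemma 2.2 together with the scaling constraint $\Delta^{1/4}h(\Delta)\le 1$ from (2.9), culminating in a Gronwall estimate.

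First, I would rewrite (2.17) as an It\^o SDE: for $s\in[t_k,t_{k+1})$,
$$
dY(s) = \tilde f(\bar Y(s))\,ds + \sum_{j_1=1}^{m} \sigma_{j_1}(s)\,dB^{j_1}(s),
$$
where $\sigma_{j_1}(s):=\tilde g_{j_1}(\bar Y(s))+\sum_{j_2=1}^{m} L^{j_1}\tilde g_{j_2}(\bar Y(s))\,\Delta B^{j_2}(s)$. Applying It\^o's formula to $V(y)=|y|^{2p}$, absorbing the Hessian cross-term via $|\langle y,\sigma\rangle|^2\le|y|^2|\sigma|^2$, and taking expectation gives
$$
\E|Y(t)|^{2p}\le \E|Y(0)|^{2p} + p\,\E\int_0^{t} |Y(s)|^{2p-2}\Big[\,2\langle Y(s),\tilde f(\bar Y(s))\rangle +(2p-1)\sum_{j_1=1}^{m} |\sigma_{j_1}(s)|^2\Big]ds.
$$

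Next, I would decompose the bracket into a piece evaluated at $\bar Y(s)$ (to which (2.16) applies) and error pieces. Writing $\langle Y,\tilde f(\bar Y)\rangle=\langle\bar Y,\tilde f(\bar Y)\rangle+\langle Y-\bar Y,\tilde f(\bar Y)\rangle$, and splitting $|\sigma_{j_1}|^2\le(1+\varepsilon)|\tilde g_{j_1}(\bar Y)|^2+(1+\varepsilon^{-1})|\sum_{j_2}L^{j_1}\tilde g_{j_2}(\bar Y)\,\Delta B^{j_2}|^2$ by Young's inequality, the principal contribution, by (2.16) applied with a parameter large enough to absorb the Young constant on $|\tilde g_{j_1}(\bar Y)|^2$, is bounded by $C(1+|\bar Y(s)|^2)$. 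Then $|\bar Y|^2\le 2|Y|^2+2|Y-\bar Y|^2$ and a further Young step convert $|Y(s)|^{2p-2}(1+|\bar Y(s)|^2)$ into $C(1+|Y(s)|^{2p}+|Y(s)-\bar Y(s)|^{2p})$. The remaining error pieces, after using (2.13) to bound $|\tilde f|, |L^{j_1}\tilde g_{j_2}|\le Ch(\Delta)^2$, are of the form $|Y|^{2p-2}|Y-\bar Y|\,h(\Delta)$ and $|Y|^{2p-2}h(\Delta)^4\sum_{j_2}|\Delta B^{j_2}|^2$. A further Young split yields $\varepsilon|Y|^{2p}$ plus remainders whose expectations, after Lemma 2.2 and $\E|\Delta B^{j_2}(s)|^{2p}\le c\Delta^p$, collapse to $c(\Delta^{1/4}h(\Delta))^{4p}\le c$ by (2.9). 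Assembling everything gives
$$
\E|Y(t)|^{2p}\le C\bigl(1+\E|Y(0)|^{2p}\bigr)+C\int_0^{t}\E|Y(s)|^{2p}\,ds,
$$
and Gronwall's inequality finishes the proof.

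The main obstacle, relative to the truncated Euler--Maruyama analysis in \cite{M15}, is the extra Milstein term $\sum_{j_2} L^{j_1}\tilde g_{j_2}(\bar Y(s))\Delta B^{j_2}(s)$ inside $\sigma_{j_1}(s)$: it contributes an $h(\Delta)^4$ factor inside $|\sigma_{j_1}|^2$, and the constraint $\Delta^{1/4}h(\Delta)\le 1$ in (2.9) is precisely what is needed to absorb it. One must also choose the Young parameter in the $|\sigma_{j_1}|^2$ split carefully so that the resulting coefficient on $|\tilde g_{j_1}(\bar Y)|^2$ matches a value $2p'-1$ allowed by (2.16), and use Lemma 2.2 to push the $|Y-\bar Y|$-corrections below $O(1)$ in expectation.
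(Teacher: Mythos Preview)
Your proposal is correct and follows essentially the same route as the paper's proof: apply It\^o's formula to $|Y(t)|^{2p}$ via the continuous interpolation (2.17), split $\langle Y,\tilde f(\bar Y)\rangle=\langle\bar Y,\tilde f(\bar Y)\rangle+\langle Y-\bar Y,\tilde f(\bar Y)\rangle$, invoke the truncated Khasminskii-type bound (2.16) on the principal part, control the drift correction by Lemma~2.2 combined with $|\tilde f|\le h(\Delta)$ from (2.13), control the Milstein correction using $|L^{j_1}\tilde g_{j_2}|\le C(h(\Delta))^2$ together with $\Delta^{1/4}h(\Delta)\le 1$ from (2.9), and conclude by Gronwall. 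The only cosmetic difference is that you use a Young-type split $|\sigma_{j_1}|^2\le(1+\varepsilon)|\tilde g_{j_1}(\bar Y)|^2+(1+\varepsilon^{-1})|\cdot|^2$ and absorb the factor $(1+\varepsilon)$ by re-choosing the parameter $p'$ in (2.16), whereas the paper effectively uses $|a+b|^2\le 2|a|^2+2|b|^2$ directly; the outcome is the same. One tiny slip: you write ``$|\tilde f|,|L^{j_1}\tilde g_{j_2}|\le C h(\Delta)^2$'' but in fact $|\tilde f|\le h(\Delta)$ while $|L^{j_1}\tilde g_{j_2}|\le C(h(\Delta))^2$; fortunately your subsequent computation uses the correct bound $|Y|^{2p-2}|Y-\bar Y|\,h(\Delta)$ for the drift correction, so the argument goes through.
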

\begin{proof}
It follows from (\ref{continuousTM}) that
\begin{equation}\label{CTM}
Y(t) = {Y}(0) + \int_{0}^{t} \tilde{f}(\bar{Y}(s))ds + \sum_{j=1}^m \int_{0}^{t} \tilde{g}_j(\bar{Y}(s)) dB^j(s) + \sum_{j_1=1}^m \int_{0}^t \sum_{j_2=1}^m L^{j_1}\tilde{g}_{j_2} (\bar{Y}(s)) \Delta B^{j_2}(s)  d B^{j_1}(s).
\end{equation}
By the It\^o formula, we have
\begin{eqnarray*}
\E |Y(t)|^{2p} &\leq& \E |Y(0)|^{2p} + 2p \E \int_{0}^{t} |Y(s)|^{2p-2} \left\[ Y(s), \tilde{f}(\bar{Y}(s) \right\] ds \nonumber \\
&&+ 2p \E \int_0^{t} \frac{2p-1}{2} |Y(s)|^{2p-2} \left| \sum_{j_1 = 1}^m \tilde{g}_{j_1}(\bar{Y}(s))
+ \sum_{j_1 = 1}^m \sum_{j_2 = 1}^m L^{j_1} \tilde{g}_{j_2} (\bar{Y}(s)) \Delta B^{j_2}(s) \right|^2 ds,
\end{eqnarray*}
where the facts that $2p |Y(s)|^{2p-2} \sum_{j_1 = 1}^m \left\[ Y(s), \tilde{g}_{j_1}(\bar{Y}(s)) + \sum_{j_2 = 1}^m L^{j_1} \tilde{g}_{j_2} (\bar{Y}(s)) \Delta B^{j_2}(s) \right\]$ is $\F_s$-measurable and
\begin{equation*}
\E \left(\sum_{j_1 = 1}^m\int_0^t 2p |Y(s)|^{2p-2}  \left\[ Y(s), \tilde{g}_{j_1}(\bar{Y}(s)) + \sum_{j_2 = 1}^m L^{j_1} \tilde{g}_{j_2} (\bar{Y}(s)) \Delta B^{j_2}(s) \right\] d B^{j_1}(s)  \right) = 0
\end{equation*}
are used. We rewrite the inequality as
\begin{eqnarray*}
\E |Y(t)|^{2p} &\leq& \E |Y(0)|^{2p} + 2p \E \int_0^t |Y(s)|^{2p-2} \left( \left\[ \bar{Y}(s), \tilde{f}(\bar{Y}(s)) \right\] + \frac{2p-1}{2} \left| \sum_{j_1 = 1}^m \tilde{g}_{j_1}(\bar{Y}(s))\right|^2 \right) ds \nonumber \\
&&+ p(2p - 1) \E \int_0^t |Y(s)|^{2p-2} \left| \sum_{j_1 = 1}^m \sum_{j_2 = 1}^m L^{j_1} \tilde{g}_{j_2} (\bar{Y}(s)) \Delta B^{j_2}(s)  \right|^2 ds \nonumber \\
&&+ 2 p \E \int_0^t |Y(s)|^{2p-2}  \left\[ Y(s) - \bar{Y}(s), \tilde{f}(\bar{Y}(s)) \right\] ds.
\end{eqnarray*}
By (\ref{hdeltabdfgG}) and (\ref{truncatedKhasminskiiCondext}), we see
\begin{eqnarray*}
\E |Y(t)|^{2p} &\leq& \E |Y(0)|^{2p} + K \E \int_0^t |Y(s)|^{2p-2} \left( 1 + |\bar{Y}(s)|^2 \right) ds \nonumber \\
&&+ K \E \int_0^t |Y(s)|^{2p-2} \left| h(\Delta) \right|^4 \Delta ds + 2p \E \int_0^t |Y(s)|^{2p-2} \left\[  Y(s) - \bar{Y}(s), \tilde{f}(\bar{Y}(s)) \right\] ds,
\end{eqnarray*}
where K is a positive constant independent of $\Delta$ and it may change from line to line but its exact value has no use to our analysis.
\par
Applying the Young inequality that
\begin{equation*}
a^{2p - 2} b \leq \frac{2p - 2}{2p} a^{2p} + \frac{1}{p} b^{p},
\end{equation*}
we obtain
\begin{eqnarray}\label{Ex2p}
\E |Y(t)|^{2p} &\leq& \E |Y(0)|^{2p} + K \int_0^t \E |Y(s)|^{2p} ds + K  \int_0^t \E |\bar{Y}(s)|^{2p} + Kt \nonumber \\
&&+ K \int_0^t \left( \left| h(\Delta) \right|^4 \Delta \right)^p ds + K \E \int_0^t \left| Y(s) - \bar{Y}(s) \right|^p \left|\tilde{f}(\bar{Y}(s))\right|^p ds.
\end{eqnarray}
By Lemma \ref{YYbar}, (\ref{deltahdelta}) and (\ref{hdeltabdfgG}), we have
\begin{equation}\label{intYYbar}
 \E \int_0^t \left| Y(s) - \bar{Y}(s) \right|^p |\tilde{f}(\bar{Y}(s))|^p ds \leq c \int_0^t \Delta^{p/2} (h(\Delta))^{2p} ds \leq c t.
\end{equation}
Substituting (\ref{intYYbar}) into (\ref{Ex2p}), by using (\ref{deltahdelta}) we then get
\begin{equation*}
\E |Y(t)|^{2p} \leq \E |Y(0)|^{2p} + Kt + Kct + K \int_0^t \left( \sup_{0 \leq u \leq s} \E |Y(u)|^p \right) ds.
\end{equation*}
As the sum of the right-hand-side terms in the above inequality is an increasing function of $t$,  we have
\begin{equation*}
\sup_{0\leq s \leq t} \E |Y(s)|^{2p} \leq \E |Y(0)|^{2p} + Kt + Kct + K \int_0^t \left( \sup_{0 \leq u \leq s} \E |Y(u)|^p \right) ds.
\end{equation*}
By the Gronwall inequality, we obtain
\begin{equation*}
\sup_{0\leq s \leq T} \E |Y(s)|^{2p} \leq K \left( 1 + \E |Y(0)|^{2p} \right),
\end{equation*}
where K is a positive constant independent of $\Delta$. Therefore, the proof is complete. \eproof
\end{proof}

\section{Main Results}\label{secMain}
 If a function  $\phi:\mathbb{R}^d \rightarrow \mathbb{R}^d$ is twice differentiable, then the following Taylor formula
\begin{equation}\label{taylorformula1}
	\begin{split}
		&\phi(x)-\phi(x^*) = \phi'(x^*)(x-x^*) + R_1(\phi)
	\end{split}
\end{equation}
holds, where $R_1(\phi)$ is the remainder term
\begin{equation}\label{R1}
	\begin{split}
		R_1(\phi) =& \int_0^1(1-\varsigma) \phi''(x^*+\varsigma(x-x^*))(x-x^*,x-x^*) d\varsigma.
	\end{split}
\end{equation}
For any $x, h_1,h_2 \in \mathbb{R}^d$, the derivatives have the following expressions
\begin{equation}\label{derivative}
	\phi'(x)(h_1) = \sum_{i=1}^{d} \frac{\partial \phi}{\partial x^{i}}h_1^{i}, \quad
	\phi''(x)(h_1,h_2) = \sum_{i,j=1}^{d} \frac{\partial^2 \phi}{\partial x^{i}\partial x^{j}}h_1^{i}h_2^{j}.
\end{equation}
Here,
\begin{equation*}
\frac{\partial \phi}{\partial x^{i}} = \left(\frac{\partial \phi_1}{\partial x^{i}}, \frac{\partial \phi_2}{\partial x^{i}}, ..., \frac{\partial \phi_d}{\partial x^{i}}  \right), \quad \phi = (\phi_1, \phi_2, ..., \phi_d).
\end{equation*}
Replacing $x$ and $x^*$ in \eqref{taylorformula1} by $Y(t)$ and $\bar{Y}(t)$, respectively, from \eqref{continuousTM} we have
\begin{equation}\label{taylorformula2}
	\begin{split}
		\phi(Y(t))-\phi(\bar{Y}(t))  = \phi'(\bar{Y}(t))\big(\sum_{j=1}^m \int_{t_k}^{t} \tilde{g}_j(\bar{Y}(s)) dB^j(s))\big) + \tilde{R}_1(\phi),
	\end{split}
\end{equation}
where
\begin{equation}\label{RT1}
	\begin{split}
		\tilde{R}_1(\phi)=\phi'(\bar{Y}(t)) \Big(\int_{t_k}^{t} \tilde{f}(\bar{Y}(s))ds + \sum_{j_1=1}^m \int_{t_k}^t \sum_{j_2=1}^m L^{j_1}\tilde{g}_{j_2} (\bar{Y}(s)) \Delta B^{j_2}(s)  d B^{j_1}(s)\Big) +R_1(\phi).
	\end{split}
\end{equation}
By (\ref{Lg}) and (\ref{derivative}), we find
\begin{equation}\label{eq:sigmac}
	\tilde{g}_{i}'(x) \big(\tilde{g}_{j}(x) \big)= L^{j}\tilde{g}_{i}(x).
\end{equation}
Therefore, by \eqref{eq:sigmac},  replacing $\phi$ in \eqref{taylorformula2} by $g_{i}$ gives
\begin{equation} \label{eq:sigmaTaylor}
	\begin{split}
		\tilde{R}_1(g_{i}) = g_{i}({Y}(t))-g_{i}(\bar{Y}(t))-\sum_{j=1}^{m}L^{j}g_{i}(\bar{Y}(t))\Delta B^{j}(t)
	\end{split}
\end{equation}
for $t_k\leq t<t_{k+1}$.

We need the following lemmas to prove our main result.
\begin{lemma} \label{mblem0}
	If Assumptions \ref{fgpoly}, \ref{KhasminskiiCond} and (\ref{dfgspoly}) hold, then for all $p\geq 1$ and $j_1,j_2 = 1, ..., m$,
	\begin{equation}
		\sup_{0 < \Delta \leq \Delta^*}\sup_{0 \leq t \leq T} \left[ \mathbb{E}|f(Y(t))|^{p} \vee \mathbb{E} |f'(Y(t))|^{p} \vee \mathbb{E} |g(Y(t))|^{p} \vee \mathbb{E} |L^{j_1} g_{j_2}(Y(t))|^p\right] < \infty.
	\end{equation}
\end{lemma}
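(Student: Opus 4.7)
The plan is to reduce everything to polynomial growth bounds on $|Y(t)|$ and then invoke Lemma \ref{TMbound}. Under Assumption \ref{fgpoly} and (\ref{dfgspoly}), each of the four quantities $f(Y(t))$, $f'(Y(t))$, $g_j(Y(t))$, and $L^{j_1}g_{j_2}(Y(t))$ admits a pointwise polynomial bound in $|Y(t)|$, while Lemma \ref{TMbound} delivers moments of $|Y(t)|$ of every order that are uniform in $\Delta \in (0,\Delta^*]$ and $t \in [0,T]$. Combining the two gives the claim.

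First I would collect the growth estimates. The bound (\ref{fgpolyext}) already gives $|f(x)|\vee|g_j(x)|\leq \alpha_2(1+|x|^{r+1})$. For the Jacobian, (\ref{dfgspoly}) yields $|f_l'(x)|\leq \alpha_3(1+|x|^{r+1})$ for every $l$, so summing in $l$ and using equivalence of norms on a finite-dimensional space gives $|f'(x)|\leq c(1+|x|^{r+1})$ for some constant $c$. For the Lie-bracket term, taking $y=0$ in Assumption \ref{fgpoly} gives
\[
|L^{j_1}g_{j_2}(x)| \leq |L^{j_1}g_{j_2}(0)| + K_2(1+|x|^r)|x| \leq c(1+|x|^{r+1}).
\]
Hence all four expressions share a common polynomial bound $c(1+|x|^{r+1})$.

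Next, applying the elementary inequality $(a+b)^p \leq 2^{p-1}(a^p+b^p)$ gives
\[
|f(Y(t))|^p \vee |f'(Y(t))|^p \vee |g_j(Y(t))|^p \vee |L^{j_1}g_{j_2}(Y(t))|^p \leq c_p\left(1+|Y(t)|^{p(r+1)}\right).
\]
Taking expectations and choosing any integer $q$ with $2q \geq p(r+1)$, Lemma \ref{TMbound} at order $2q$ together with Jensen's inequality gives
\[
\sup_{0<\Delta\leq\Delta^*}\sup_{0\leq t\leq T}\E|Y(t)|^{p(r+1)} \leq \sup_{0<\Delta\leq\Delta^*}\sup_{0\leq t\leq T}\left(1+\E|Y(t)|^{2q}\right)<\infty,
\]
from which the uniform boundedness of each of the four expectations follows at once.

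The argument is essentially polynomial bookkeeping, so there is no genuine obstacle once Lemma \ref{TMbound} is available. The only point demanding mild care is to choose the integer $q$ large enough that $2q$ dominates the maximal exponent $p(r+1)$ produced by the polynomial bounds above; once this is done, the desired supremum bound is immediate.
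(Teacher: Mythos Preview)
Your proposal is correct and follows exactly the approach the paper intends: the paper's own ``proof'' is the single line ``From Lemma \ref{TMbound}, the results hold immediately,'' and your write-up simply supplies the polynomial-growth bookkeeping that makes this implication explicit. Nothing further is needed.
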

From Lemma \ref{TMbound}, the results hold immediately.

\begin{lemma} \label{mblem}
	If Assumptions \ref{fgpoly} and \ref{KhasminskiiCond} hold, then for all $p \geq 1$ and $j = 1, ..., m$,
	\begin{equation} \label{d7}
	\sup_{0 \leq t \leq T} \left[ \mathbb{E}|x(t))|^{p} \vee \mathbb{E} |f(x(t))|^{p} \vee \mathbb{E} |g_j(x(t))|^{p} \right]<\infty.
\end{equation}
\end{lemma}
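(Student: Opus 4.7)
The plan is to argue that Lemma \ref{mblem} is an almost immediate consequence of the polynomial growth bound \eqref{fgpolyext} together with the moment bound \eqref{sdepthmoment} on the true solution. I would not expect any real obstacle here; the only content is to verify that the exponents line up correctly.

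First I would handle the $\mathbb{E}|x(t)|^p$ part, which is the easiest. Under Assumption \ref{KhasminskiiCond}, the inequality \eqref{sdepthmoment} gives $\mathbb{E}|x(t)|^{2p} \leq K(1 + |x(0)|^{2p})$ for every $p \geq 1$, with $K$ depending on $p$ and $t$. Taking the supremum over $t \in [0,T]$ yields $\sup_{0 \leq t \leq T} \mathbb{E}|x(t)|^{p} < \infty$ for every $p \geq 1$ (replacing $2p$ by any real $p \geq 1$ through Lyapunov's inequality, or just noting the inequality was stated for every $p \geq 1$ so it extends to every positive real via Jensen).

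Next I would bound $\mathbb{E}|f(x(t))|^p$ and $\mathbb{E}|g_j(x(t))|^p$. Assumption \ref{fgpoly} implies the polynomial growth estimate \eqref{fgpolyext}, namely $|f(x)| \vee |g_j(x)| \leq \alpha_2(1 + |x|^{r+1})$. Raising to the $p$-th power and applying the elementary inequality $(a+b)^p \leq 2^{p-1}(a^p + b^p)$, I obtain
\begin{equation*}
|f(x(t))|^p \vee |g_j(x(t))|^p \leq \alpha_2^p \, 2^{p-1}\bigl(1 + |x(t)|^{p(r+1)}\bigr).
\end{equation*}
Taking expectation and the supremum in $t$ reduces matters to bounding $\sup_{0 \leq t \leq T}\mathbb{E}|x(t)|^{p(r+1)}$. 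Since $p(r+1) \geq 1$, this is finite by \eqref{sdepthmoment} applied with the integer $\lceil p(r+1)/2 \rceil$ (combined with Lyapunov if needed), which finishes the proof.

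The only mild subtlety to note explicitly is that \eqref{sdepthmoment} is stated for integer $2p$ with $p \geq 1$, so I would briefly remark that by Jensen's inequality it transfers to every real exponent $q \geq 1$, ensuring the exponent $p(r+1)$ that arises in the previous step is covered. With that in place, all three quantities in \eqref{d7} are uniformly bounded on $[0,T]$, completing the proof.
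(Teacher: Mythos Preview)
Your proposal is correct and aligns with the paper's own treatment: the paper simply writes ``The proof is similar to that of \eqref{sdepthmoment}'' and gives no further detail, so your argument---invoking the moment bound \eqref{sdepthmoment} on $x(t)$ and then the polynomial growth \eqref{fgpolyext} to control $f(x(t))$ and $g_j(x(t))$---is exactly the intended route, only spelled out more carefully. The remark about covering non-even exponents via Jensen is a nice touch that the paper omits.
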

The proof is similar to that of (\ref{sdepthmoment}).

\begin{lemma} \label{Rlem}
If Assumptions \ref{fgpoly}, \ref{KhasminskiiCond} and (\ref{dfgspoly}) hold, then for $i =1,2,..., m$ and all $p \geq 1$
	\begin{equation} \label{R_estimate}
		\E|\tilde{R}_1(f) |^p \vee \E|\tilde{R}_1(g_{i})|^p \leq C \Delta^p(h(\Delta))^{2p},
	\end{equation}
	where $C$ is a positive constant independent of $\Delta$.
\end{lemma}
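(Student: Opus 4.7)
The plan is to bound $\tilde{R}_1(\phi)$ for $\phi \in \{f,g_i\}$ by splitting the expression in \eqref{RT1} into three pieces and estimating each separately, then combining via the elementary inequality $|a+b+c|^p \leq 3^{p-1}(|a|^p+|b|^p+|c|^p)$. Write $\tilde{R}_1(\phi) = A + B + R_1(\phi)$, where
\[
A = \phi'(\bar Y(t))\!\int_{t_k}^{t}\! \tilde f(\bar Y(s))\,ds,
\quad
B = \phi'(\bar Y(t)) \sum_{j_1=1}^m \!\int_{t_k}^{t}\!\! \sum_{j_2=1}^m L^{j_1}\tilde g_{j_2}(\bar Y(s))\,\Delta B^{j_2}(s)\,dB^{j_1}(s),
\]
and $R_1(\phi)$ is the Taylor remainder in \eqref{R1}. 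Throughout, I will use that (\ref{dfgspoly}) yields the polynomial growth $|\phi'(x)|\vee|\phi''(x)| \leq C(1+|x|^{r+1})$, that (\ref{hdeltabdfgG}) gives $|\tilde f|\vee|\tilde g_j|\vee|\tilde G_j^l|\leq h(\Delta)$, and that Lemma \ref{TMbound} supplies uniform-in-$\Delta$ bounds on all moments of $Y(t)$ and $\bar Y(t)$.

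For the drift piece $A$, apply H\"older's inequality in $s$ to obtain $|A|^p \leq \Delta^{p-1}|\phi'(\bar Y(t))|^p \int_{t_k}^t |\tilde f(\bar Y(s))|^p ds \leq \Delta^p h(\Delta)^p |\phi'(\bar Y(t))|^p$, then take expectation and use the moment bound on $|\phi'(\bar Y(t))|^p$. Since $h(\Delta)^p \leq C h(\Delta)^{2p}$ (because $h(\Delta)$ is bounded below by a positive constant on $(0,\Delta^*]$), this yields $\E|A|^p \leq C\Delta^p h(\Delta)^{2p}$.

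For the Milstein stochastic piece $B$, apply the Cauchy--Schwarz inequality in $\omega$ to separate $\phi'(\bar Y(t))$ from the stochastic integral, then estimate the integral by the Burkholder--Davis--Gundy inequality. The integrand $L^{j_1}\tilde g_{j_2}(\bar Y(s))\,\Delta B^{j_2}(s)$ has modulus at most $d\,h(\Delta)^2|\Delta B^{j_2}(s)|$ by (\ref{hdeltabdfgG}). BDG together with H\"older in time then gives
\[
\E\Bigl|\int_{t_k}^t L^{j_1}\tilde g_{j_2}(\bar Y(s))\Delta B^{j_2}(s)\,dB^{j_1}(s)\Bigr|^{2p}
\leq C\,h(\Delta)^{4p}\,\Delta^{p-1}\int_{t_k}^t \E|\Delta B^{j_2}(s)|^{2p}\,ds
\leq C\,h(\Delta)^{4p}\Delta^{2p},
\]
since $\E|\Delta B^{j_2}(s)|^{2p}\leq C\Delta^p$ on $[t_k,t_{k+1})$. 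Taking the square root of this $2p$-moment estimate and combining with a uniform bound on $\E|\phi'(\bar Y(t))|^{2p}$ produces $\E|B|^p \leq C\Delta^p h(\Delta)^{2p}$.

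For the Taylor remainder $R_1(\phi)$, use the pointwise bound $|R_1(\phi)|\leq C\bigl(1+|Y(t)|^{r+1}+|\bar Y(t)|^{r+1}\bigr)|Y(t)-\bar Y(t)|^2$ coming from the polynomial growth of $\phi''$ and convexity in $\varsigma$. Then H\"older's inequality splits $\E|R_1(\phi)|^p$ into a product of a bounded-moment factor (by Lemma \ref{TMbound}) and a factor of the form $(\E|Y(t)-\bar Y(t)|^{2pq})^{1/q}$ for some $q>1$, which Lemma \ref{YYbar} controls by $C\Delta^p h(\Delta)^{2p}$. The same bound therefore applies to $R_1(\phi)$. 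The main obstacle I anticipate is book-keeping in piece $B$: one must carefully pair BDG with the fact that $\Delta B^{j_2}(s)$ itself is a small Brownian increment to squeeze out the extra factor of $\Delta^{p/2}$ that pushes the estimate from $\Delta^{p/2}h(\Delta)^{2p}$ (which would only give strong order $1/2$) to the desired $\Delta^p h(\Delta)^{2p}$.
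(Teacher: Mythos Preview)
Your proposal is correct and follows essentially the same three-term decomposition as the paper's proof: the drift piece, the Milstein double-integral piece, and the second-order Taylor remainder, each bounded by $C\Delta^p(h(\Delta))^{2p}$ via H\"older, the moment bounds of Lemmas \ref{YYbar}--\ref{TMbound}, and (\ref{hdeltabdfgG}). The only noteworthy difference is in your treatment of the piece $B$: the paper exploits the commutativity assumption to rewrite the iterated stochastic integral explicitly as $\tfrac12 L^{j_1}\tilde g_{j_2}(\bar Y(t))\bigl(\Delta B^{j_1}(t)\Delta B^{j_2}(t)-\delta_{j_1,j_2}\Delta\bigr)$ and then uses independence of $\bar Y(t)$ from the Brownian increments, whereas you apply the Burkholder--Davis--Gundy inequality directly to the stochastic integral without invoking commutativity. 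Both routes yield the same bound; your BDG argument is marginally more robust in that it would survive without the commutative-noise assumption.
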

\begin{proof}
We first give an estimate on $\E|\tilde{R}_1(f)|^p$.   Applying Lemmas  \ref{YYbar} and \ref{TMbound}, we can find a constant $C$ such that
\begin{equation}\label{R1mu}
	\begin{split}
		\E |R_1(f)|^p
		\leq &  \int_0^1 (1-\varsigma)^p~ \E \Big|f^{''}(\bar{Y}(t)+\varsigma({Y}(t)-\bar{Y}(t)))\, \left({Y}(t)-\bar{Y}(t),{Y}(t)-\bar{Y}(t)\right)\Big|^p d\varsigma
		\\ \leq &  \int_0^1 \left[\E \Big|f^{''}(\bar{Y}(t)+\varsigma({Y}(t)-\bar{Y}(t))) \Big|^{2p}\,  \E \Big|{Y}(t)-\bar{Y}(t)\Big|^{4p} \right]^{1/2} d\varsigma
		\\ \leq &   C\left(1+\E|Y(t)|^{2p(r+1)}+\E|\bar{Y}(t)|^{2p(r+1)}\right)^{1/2} \cdot\left(\E|{Y}(t)-\bar{Y}(t)|^{4p}\right)^{1/2}
		\\ \leq & C \Delta^p \left(h(\Delta)\right)^{2p},
	\end{split}
\end{equation}
where the polynomial growth condition (\ref{dfgspoly}) on $f''(x)$, the H\"{o}lder inequality and the Jensen inequality have been used.
To estimate $\E|\tilde{R}_1(f)|^p$, we derive from \eqref{RT1}  that
\begin{align}\label{R1p}	
\E|\tilde{R}_1(f)|^p &\leq C\Big[\Delta^p \E \left|f'(\bar{Y}(t))\tilde{f}(\bar{Y}(t))\right|^p \nonumber \\
	&+ \frac{1}{2}\sum^m_{j_1,j_2=1}\E \left|f'(\bar{Y}(t))\Big(L^{j_1} \tilde{g}_{j_2}(\bar{Y}(t))(\Delta B^{j_1}(t) \Delta B^{j_2}(t)-\delta_{j_1,j_2}\Delta)\Big)\right|^p+\E|{R}_1(f)|^p\Big]
\end{align}
for $t\in[t_k,t_{k+1})$, where the Kronecker delta $\delta_{j_1,j_2}$ is a piecewise function of variables $j_1$ and $j_2$.
Note that $t-t_{k} \leq \Delta$, by using the H\"{o}lder inequality and the Burkholder-Davis-Gundy inequality we have
\begin{equation}\label{eq:DeltaW2}
\begin{array}{cl}
 \E |\Delta B^{j_1}(t) \Delta B^{j_2}(t)-\delta_{j_1,j_2}\Delta|^p
 &\leq 2^{p-1} \big[\E |\Delta B^{j_1}(t) \Delta B^{j_2}(t)|^p+\Delta^p\big] \\
 &\leq 2^{p-1} \big[\E |\Delta B^{j_1}(t)|^{2p} \E |\Delta B^{j_2}(t)|^{2p}\big]^{1/2}+2^{p-1}\Delta^p \\
 &\leq 2^{p} \Delta^p.
\end{array}
\end{equation}
Using Lemma \ref{mblem0}, (\ref{hdeltabdfgG}) and the H\"{o}lder inequality, we can show that for $0\leq t \leq T, 1\leq j_1,j_2\leq m$
\begin{equation}\label{con8}
	\begin{split}
		&\E |f'(\bar{Y}(t))\tilde{f}(\bar{Y}(t))|^p \leq \big[\E|f'(\bar{Y}(t))|^{2p} \cdot \E|\tilde{f}(\bar{Y}(t))|^{2p}\big]^{1/2}
		\leq C (h(\Delta))^{p},                                                       \\
		&\E \left|f'(\bar{Y}(t))L^{j_1} \tilde{g}_{j_2}(\bar{Y}(t))\right|^p \leq \big[\E|f'(\bar{Y}(t))|^{2p} \cdot \E|L^{j_1} \tilde{g}_{j_2}(\bar{Y}(t))|^{2p}\big]^{1/2}
		\leq C (h(\Delta))^{2p}.
	\end{split}
\end{equation}
Now,
substituting (\ref{R1mu}), \eqref{eq:DeltaW2} and (\ref{con8}) into (\ref{R1p}) and making use of the  independence of $\bar{Y}(t)$ and
$\Delta B^{j_1}(t), \Delta B^{j_2}(t)$, we obtain
\begin{equation*}\label{R1pmu}
	\E|\tilde{R}_1(f) |^p  \leq C \Delta^p(h(\Delta))^{2p}
\end{equation*}
as required.  Similarly, we can show
\begin{equation*}\label{R1pf}
	\E|\tilde{R}_1(g_{i})|^p\leq C \Delta^p(h(\Delta))^{2p}.
\end{equation*}
The proof is complete. \eproof
\end{proof}
\par \noindent
For any real number $R > |x(0)|$, we define two stopping times
\begin{equation*}
\tau_R:= \inf \{t\geq 0, |x(t)| \geq R \} ~~\text{and}~~\rho_R:= \inf \{t \geq 0, |Y(t)| \geq R\}.
\end{equation*}

\begin{theorem}\label{thmthetageqT}
Let Assumptions \ref{fgpoly},  \ref{KhasminskiiCond} and condition (\ref{dfgspoly}) hold. Given any real number $R > |x_0|$,  if $\Delta \in (0, \Delta^{*}]$ is chosen to be sufficiently small such that  $\mu^{-1}(h(\Delta))\geq R$, then
	\begin{equation*}
	\E \left( \left| e(t\wedge \theta) \right|^{2p}  \right)\leq C \Delta^{2p}(h(\Delta))^{4p},
	\end{equation*}
where $\theta :=\tau_R \wedge \rho_R$ and $e(t) :=x(t)-Y(t)$.
\end{theorem}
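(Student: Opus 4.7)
\medskip
\noindent\textbf{Proof plan.} The strategy is to write the error $e(t)=x(t)-Y(t)$ in integral form, apply It\^o's formula to $|e(t\wedge\theta)|^{2p}$, decompose the drift and the Milstein-corrected diffusion into a locally Lipschitz piece plus a Taylor remainder already estimated by Lemma~\ref{Rlem}, and close with Gronwall. Before anything else, note that for $s\le\theta$ both $|x(s)|$ and $|Y(s)|$ are bounded by $R$, and consequently $|\bar Y(s)|=|Y(t_k)|\le R$ as well; since $R\le\mu^{-1}(h(\Delta))$, the truncated coefficients coincide with the originals on these arguments, that is, $\tilde f(\bar Y(s))=f(\bar Y(s))$ and $\tilde g_j(\bar Y(s))=g_j(\bar Y(s))$. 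The commutativity condition combined with \eqref{eq:sigmaTaylor} then yields the central identity
\begin{equation*}
g_{j_1}(x(s))-g_{j_1}(\bar Y(s))-\sum_{j_2=1}^m L^{j_1}\tilde g_{j_2}(\bar Y(s))\,\Delta B^{j_2}(s)=\bigl(g_{j_1}(x(s))-g_{j_1}(Y(s))\bigr)+\tilde R_1(g_{j_1}),
\end{equation*}
so that the Milstein correction exactly absorbs what would otherwise be the $O(\Delta^{1/2})$ local diffusion error of an Euler step.

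It\^o's formula then produces a drift contribution $2p|e|^{2p-2}\langle e,f(x)-f(\bar Y)\rangle$, a quadratic variation contribution $p(2p-1)|e|^{2p-2}|\Sigma|^2$ whose $j_1$-th component $\Sigma_{j_1}$ is the left-hand side of the identity above, and a mean-zero martingale term. For the drift I would split $f(x(s))-f(\bar Y(s))=[f(x(s))-f(Y(s))]+[f(Y(s))-f(\bar Y(s))]$; Assumption~\ref{fgpoly} together with the boundedness $|x(s)|,|Y(s)|\le R$ on $[0,\theta]$ turns the first bracket into a Lipschitz estimate producing $C|e(s)|^{2p}$ after Young's inequality, while the second bracket is bounded in $L^{2p}$ by $C\Delta^{2p}(h(\Delta))^{4p}$ via Lemma~\ref{Rlem} applied to $\tilde R_1(f)$ together with a Burkholder--Davis--Gundy bound on the auxiliary $f'(\bar Y)\int\tilde g\,dB$ piece in \eqref{RT1}. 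For the quadratic variation the identity above splits $|\Sigma_{j_1}|^2$ into $|g_{j_1}(x)-g_{j_1}(Y)|^2$ and $|\tilde R_1(g_{j_1})|^2$ (plus a cross term handled by Young's inequality); the former gives $C|e(s)|^{2p}$ by Assumption~\ref{fgpoly} together with the boundedness of $x$ and $Y$ on $[0,\theta]$, while Lemma~\ref{Rlem} (with exponent $2p$) controls $\E|\tilde R_1(g_{j_1})|^{2p}\le C\Delta^{2p}(h(\Delta))^{4p}$.

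Collecting everything, taking expectations, and observing that the stopped integrands vanish for $s>\theta$, I would arrive at
\begin{equation*}
\E|e(t\wedge\theta)|^{2p}\le C\Delta^{2p}(h(\Delta))^{4p}+C\int_0^t \E|e(s\wedge\theta)|^{2p}\,ds,
\end{equation*}
after which Gronwall's inequality finishes the proof. The main obstacle, I expect, is the precise placement of the commutativity condition: it must be invoked exactly where the scheme's correction $\sum_{j_2}L^{j_1}\tilde g_{j_2}$ meets the Taylor expansion of $g_{j_1}$ along the stochastic flow, which naturally produces $\sum_{j_2}L^{j_2}g_{j_1}$; without this identification the local diffusion error remains $O(\Delta^{1/2})$ and the rate collapses back to $1/2$. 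A secondary bookkeeping point is to ensure that each H\"older/Young splitting interacts only with moments of $x$, $Y$, and $\bar Y$ supplied by Lemmas~\ref{mblem0}--\ref{Rlem}, so that the accumulated $h(\Delta)$ factors never exceed $(h(\Delta))^{4p}$.
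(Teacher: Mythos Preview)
Your overall architecture---It\^o's formula on $|e|^{2p}$, the Taylor identity \eqref{eq:sigmaTaylor} for the diffusion, Gronwall---matches the paper. But two of your steps do not close as written.

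First, you bound $\langle e,f(x)-f(Y)\rangle$ and $|g_j(x)-g_j(Y)|^2$ \emph{separately} via Assumption~\ref{fgpoly} together with $|x|,|Y|\le R$. That produces a local Lipschitz constant of size $K_2(1+2R^r)$, and after Gronwall the final constant is of order $e^{C(R)T}$. Formally Theorem~\ref{thmthetageqT} is stated for fixed $R$, but its constant $C$ must in fact be independent of $R$: in Theorem~\ref{thmmain} one takes $R=(\Delta^p(h(\Delta))^{2p})^{-1/(q-p)}\to\infty$ as $\Delta\to0$, and any $R$-dependence in $C$ destroys the rate. The paper avoids this by packaging drift and diffusion together via Assumption~\ref{KhasminskiiCond},
\[
\langle e,f(x)-f(Y)\rangle+(2p-1)\sum_i|g_i(x)-g_i(Y)|^2\le K_1|e|^2,
\]
whose constant $K_1$ is global. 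You never invoke Assumption~\ref{KhasminskiiCond}, and without it the bound is not $R$-free.

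Second, your claim that $\E|f(Y(s))-f(\bar Y(s))|^{2p}\le C\Delta^{2p}(h(\Delta))^{4p}$ is not attainable by a plain BDG estimate on the piece $f'(\bar Y)\int_{t_k}^{s}\tilde g\,dB$. BDG plus the moment bound on $f'(\bar Y)$ yields only
$\E\bigl|f'(\bar Y)\int_{t_k}^{s}\tilde g\,dB\bigr|^{2p}\le C\Delta^{p}(h(\Delta))^{2p}$,
and for $h(\Delta)=\Delta^{-\epsilon}$ with small $\epsilon$ this is $\Delta^{p(1-2\epsilon)}$, strictly larger than the target $\Delta^{2p(1-2\epsilon)}$. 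The paper does not try to bound this $L^{2p}$-norm; instead it keeps the full inner product
\[
J_4=\E\int_0^{t\wedge\theta}\Bigl|\Bigl\langle e(s),\,f'(\bar Y(s))\sum_j\int_{t_k}^{s} g_j(\bar Y)\,dB^j\Bigr\rangle\Bigr|^p\,ds
\]
and appeals to the technique of (3.35) in \cite{Gan01} to get $J_4\le C\Delta^{2p}$. That argument exploits the martingale structure of the Brownian increment against $\F_{t_k}$-measurable factors and a decomposition of $e(s)$ over $[t_k,s]$; it is not a consequence of moment bounds alone. Without this device (or an equivalent one) your drift remainder contributes $\Delta^{p}(h(\Delta))^{2p}$, which does not sit under $C\Delta^{2p}(h(\Delta))^{4p}$.
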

\begin{proof} By the It\^{o} formula, we can show that for $0\leq t\leq T$,
\begin{equation}\label{eq:ito1}
	\begin{split}
	&	\E \left( \left| e(t\wedge \theta) \right|^{2p}  \right)\\
		 = & 2p \E \int_0^{t\wedge \theta}  \left| e(s) \right|^{2p-2} \Big\langle x(s) - {Y}(s),  f(x(s))-\tilde{f}(\bar{Y}(s)) \Big\rangle ds\\
&+ 2p\sum_{i=1}^m \E \int_0^{t\wedge \theta} \left| e(s) \right|^{2p-2} \frac{2p-1}{2}\Big| g_i(x(s)) - \tilde{g}_i(\bar{Y}(s))-\sum_{j=1}^m L^j  \tilde{g_i}(\bar{Y}(s)) \Delta B^j(s) \Big|^2 ds.
\end{split}
\end{equation}
When $0\leq s\leq t\wedge \theta$, we have $|\bar{Y}(s)|<R$ and  $\mu^{-1}(h(\Delta))\geq R$, which yields $|\bar{Y}(s)|<\mu^{-1}(h(\Delta))$. According to (\ref{DefnTrunc1}) and (\ref{DefnTrunc2}),  we have that
$$
\tilde{f}(\bar{Y}(s))={f}(\bar{Y}(s)) ~~\text{and} ~~ \tilde{g_i}(\bar{Y}(s))={g_i}(\bar{Y}(s))~~~\text{for} ~~0\leq s \leq t\wedge \theta.
$$
Therefore, it follows from (\ref{eq:ito1}) and \eqref{eq:sigmaTaylor} that
\begin{equation}\label{eq:ito2}
\begin{split}
&	\E \left( \left| e(t\wedge \theta) \right|^{2p}  \right)\\
= & 2p \E \int_0^{t\wedge \theta}  \left| e(s) \right|^{2p-2} \Big(\big\langle x(s) - {Y}(s),  f(x(s))-{f}({Y}(s)) \big\rangle + \big\langle x(s) - {Y}(s),  f(Y(s))-{f}(\bar{Y}(s)) \big\rangle \Big ) ds
\\ &
+ 2p\sum_{i=1}^m \E \int_0^{t\wedge \theta} \left| e(s) \right|^{2p-2} \frac{2p-1}{2}\Big| g_i(x(s)) - {g}_i({Y}(s))+\tilde{R}_1(g_i) \Big|^2 ds
\\
\leq & 2p \big(J_1+J_2+J_3\big),
\end{split}
\end{equation}
where
\begin{equation}\label{IntJ1}
\begin{split}
J_1&=\E \int_0^{t\wedge \theta}  \left| e(s) \right|^{2p-2} \Big(\big\langle x(s) - {Y}(s),  f(x(s))-{f}({Y}(s)) \big\rangle \\
 &~+ ({2p-1}) \sum_{i=1}^m \Big| g_i(x(s)) - {g}_i({Y}(s)) \Big|^2\Big) ds,
\end{split}~~
\end{equation}
\begin{equation}\label{IntJ2}
 J_2=\E \int_0^{t\wedge \theta}  \left| e(s) \right|^{2p-2}  \big\langle x(s) - {Y}(s),  f(Y(s))-{f}(\bar{Y}(s)) \big\rangle  ds,
\end{equation}
and
\begin{equation}\label{IntJ3}
J_3=({2p-1}) \sum_{i=1}^m \E \int_0^{t\wedge \theta} \left| e(s) \right|^{2p-2} \Big|\tilde{R}_1(g_i) \Big|^2 ds.~~~~~~~~~~~~~~~~~
\end{equation}
Applying Assumption \ref{KhasminskiiCond} to $J_1$, we obtain
\begin{equation}\label{Est_J1}
J_1\leq K_1 \E \int_0^{t\wedge \theta}  \left| e(s) \right|^{2p}  ds.
\end{equation}
Inserting the expression (\ref{taylorformula2}) into (\ref{IntJ2}) gives
\begin{equation}
J_2\leq \E \int_0^{t\wedge \theta}  \left| e(s) \right|^{2p-2}  \big\langle x(s) - {Y}(s),  f'(\bar{Y}(t))\big(\sum_{j=1}^m \int_{t_k}^{s} {g}_j(\bar{Y}(s_1)) dB^j(s_1)\big) + \tilde{R}_1(f) \big\rangle  ds.
\end{equation}
By the Young inequality and the H\"{o}lder inequality, we get
\begin{equation}
\begin{split}
J_2  \leq & C \E \int_0^{t\wedge \theta}  \left| e(s) \right|^{2p}  + \big| \big\langle x(s) - {Y}(s),  f'(\bar{Y}(t))\big(\sum_{j=1}^m \int_{t_k}^{s} {g}_j(\bar{Y}(s_1)) dB^j(s_1)\big) \big\rangle \big |^p \\
 &+ \big|\big\langle x(s) - {Y}(s),  \tilde{R}_1(f) \big\rangle \big|^p  ds\\
 \leq & C \E \int_0^{t\wedge \theta} \big( \left| e(s) \right|^{2p}  +  \big| \tilde{R}_1(f)  \big|^{2p}\big)  ds +C J_4,
\end{split}
\end{equation}
where
$$
J_4=\E \int_0^{t\wedge \theta}   \big| \big\langle x(s) - {Y}(s),  f'(\bar{Y}(t))\big(\sum_{j=1}^m \int_{t_k}^{s} {g}_j(\bar{Y}(s_1)) dB^j(s_1)\big) \big\rangle \big |^p ds.
$$
Following a very similar approach used for (3.35) in \cite{Gan01}, we can show
\begin{equation*}
J_4\leq C \Delta^{2p}.
\end{equation*}
Then, we have
\begin{equation}\label{Est_J2}
\begin{split}
J_2 \leq C \E \int_0^{t\wedge \theta} \big( \left| e(s) \right|^{2p}  +  \big| \tilde{R}_1(f)  \big|^{2p}\big)  ds +C \Delta^{2p}.
\end{split}
\end{equation}
Applying the Young inequality to (\ref{IntJ3}) gives
\begin{equation}\label{Est_J3}
J_3\leq C \sum_{i=1}^m \E \int_0^{t\wedge \theta}  \Big( \left| e(s) \right|^{2p}+ \big|\tilde{R}_1(g_i) \big|^{2p}  \Big) ds.
\end{equation}
Substituting (\ref{Est_J1}), (\ref{Est_J2}) and (\ref{Est_J3}) into (\ref{eq:ito2}), and then applying the Gronwall inequality and Lemma \ref{Rlem},  we obtain the desired result. \eproof
\end{proof}

\begin{lemma}\label{tauest}
	Let Assumptions \ref{fgpoly} and \ref{KhasminskiiCond} hold. For any real number $R > |x(0)|$, the estimate
	\begin{equation*}
	\PP \left( \tau_{R} \leq T \right) \leq \frac{K}{R^{2p}}
	\end{equation*}
	holds for some positive constant $K$ independent of $R$.
\end{lemma}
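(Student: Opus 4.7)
The plan is to reduce the probability bound to a moment bound via a Chebyshev-type estimate and then establish the moment bound by applying It\^o's formula to $|x(t)|^{2p}$ up to the stopping time $\tau_R$.

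First I would observe that, by the sample path continuity of the solution $x(t)$ and the definition of $\tau_R$, on the event $\{\tau_R \leq T\}$ we have $|x(\tau_R)| = R$. Hence
\begin{equation*}
R^{2p}\PP(\tau_R \leq T) \leq \E\!\left[|x(T\wedge\tau_R)|^{2p} I_{\{\tau_R \leq T\}}\right] \leq \E|x(T\wedge\tau_R)|^{2p}.
\end{equation*}
So the whole task is to find a constant $K$, independent of $R$, such that $\E|x(T\wedge\tau_R)|^{2p} \leq K$.

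Next I would apply It\^o's formula to $|x(t)|^{2p}$ and stop at $\tau_R$. Because $|x(s)| \leq R$ for $s < \tau_R$, the stochastic integral
$$\int_0^{t\wedge\tau_R} 2p|x(s)|^{2p-2}\sum_{j=1}^m \langle x(s), g_j(x(s))\rangle\, dB^j(s)$$
is a genuine martingale with zero mean. Thus
\begin{equation*}
\E|x(t\wedge\tau_R)|^{2p} = |x(0)|^{2p} + 2p\,\E\int_0^{t\wedge\tau_R} |x(s)|^{2p-2}\Bigl(\langle x(s), f(x(s))\rangle + \tfrac{2p-1}{2}\sum_{j=1}^m |g_j(x(s))|^2\Bigr) ds.
\end{equation*}
By inequality (\ref{KhasminskiiCondext}), the integrand in the drift term is bounded above by $\alpha_1 |x(s)|^{2p-2}(1+|x(s)|^2) \leq C(1 + |x(s)|^{2p})$ for a constant $C$ depending on $p$ but not on $R$ (using Young's inequality to absorb $|x(s)|^{2p-2}$). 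Hence
\begin{equation*}
\E|x(t\wedge\tau_R)|^{2p} \leq |x(0)|^{2p} + C\int_0^{t} \bigl(1 + \E|x(s\wedge\tau_R)|^{2p}\bigr) ds.
\end{equation*}
The Gronwall inequality then yields $\E|x(T\wedge\tau_R)|^{2p} \leq K$ with $K = K(T,p,|x(0)|)$ independent of $R$. Combining this with the displayed Chebyshev-type inequality above gives the assertion.

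The only delicate point is justifying that the stochastic integral really has zero expectation, which is why I insist on stopping at $\tau_R$ before taking the expectation in It\^o's formula; the boundedness of the integrand on $[0,t\wedge\tau_R]$ (via $|x(s)| \leq R$) makes this immediate. Everything else is routine: polynomial growth estimates (\ref{fgpolyext}) are used only to localise the martingale, while the $R$-independent size of $K$ comes entirely from Assumption \ref{KhasminskiiCond}. This is a direct analogue of the argument producing (\ref{sdepthmoment}), with the stopping time inserted to make the martingale bounded.
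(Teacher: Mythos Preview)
Your argument is correct and follows essentially the same route as the paper: establish the stopped moment bound $\E|x(T\wedge\tau_R)|^{2p}\le K$ via It\^o's formula together with (\ref{KhasminskiiCondext}) and Gronwall (this is exactly the proof of (\ref{sdepthmoment}) with the stopping time inserted), and then convert it to the probability estimate by the Chebyshev-type inequality $R^{2p}\PP(\tau_R\le T)\le \E|x(T\wedge\tau_R)|^{2p}$. One cosmetic point: in the vector case $d>1$ the displayed It\^o identity should be an inequality, since the Hessian of $|x|^{2p}$ contributes an extra term $p(2p-2)|x|^{2p-4}\sum_j|\langle x,g_j\rangle|^2$ which is only bounded above (via Cauchy--Schwarz) by $p(2p-2)|x|^{2p-2}\sum_j|g_j|^2$; this does not affect the rest of your argument.
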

The proof of this lemma is similar to that of (\ref{sdepthmoment}). Briefly speaking, replacing $t$ by $\tau_R \we T$ in (\ref{sdepthmoment}) we see
\begin{equation*}
\E \left| x(\tau_R \we T)\right|^{2p} \leq K.
\end{equation*}
Then
\begin{equation*}
K \ge \E \left| x(\tau_R \we T)\right|^{2p} \geq \E \Big( \left|x(\tau_R)\right|^{2p} I_{\{\tau_R \leq T\} } \Big) = R^{2p} \PP \left(\tau_R \leq T \right),
\end{equation*}
which implies the assertion.
\begin{lemma}\label{rhoest}
Let (\ref{KhasminskiiCondext}) hold. For any real number $R > |x(0)|$ and any sufficiently small $\Delta \in (0, \Delta^*]$, the estimate
	\begin{equation*}
	 \PP \left( \rho_{R} \leq T \right) \leq \frac{K}{R^{2p}}
	\end{equation*}
holds for some positive constant $K$ independent of $R$ and $\Delta$.
\end{lemma}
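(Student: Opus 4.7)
The plan is to reduce the probability estimate to a moment bound via Chebyshev's inequality, and then to establish that moment bound by repeating essentially the It\^o--Gronwall computation of Lemma \ref{TMbound}, but now run up to the stopping time $T\we\rho_R$ instead of the deterministic time $T$. Observe first that the continuous-time approximation $Y(t)$ in (\ref{continuousTM}) has continuous sample paths, since it is built from time integrals and It\^o integrals of integrands that are bounded for fixed $\Delta$. Hence on $\{\rho_R\leq T\}$ we have $|Y(\rho_R)|=R$, so
\[
\E\left|Y(T\we\rho_R)\right|^{2p}\ \geq\ R^{2p}\,\PP(\rho_R\leq T),
\]
and it suffices to exhibit a constant $K$, independent of both $R$ and $\Delta$, such that $\E|Y(T\we\rho_R)|^{2p}\leq K$.

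To obtain that uniform bound, I would apply the It\^o formula to $|Y(t)|^{2p}$ between $0$ and $t\we\rho_R$, following line by line the calculation already carried out in Lemma \ref{TMbound}. Because the truncated coefficients $\tilde f$, $\tilde g_j$ and the Milstein correction $L^{j_1}\tilde g_{j_2}$ are bounded by a power of $h(\Delta)$ (see (\ref{hdeltabdfgG})), the resulting stochastic integrals are genuine martingales, so their expectations at the bounded stopping time $t\we\rho_R$ vanish without any further localisation. The surviving drift contributions split as in Lemma \ref{TMbound}: the leading $\[\bar Y(s),\tilde f(\bar Y(s))\]+(2p-1)\sum_j|\tilde g_j(\bar Y(s))|^2$ piece is controlled by the truncated Khasminskii bound (\ref{truncatedKhasminskiiCondext}); the extra Milstein piece $|h(\Delta)|^4\Delta$ is bounded by $1$ through (\ref{deltahdelta}); and the cross term $\[Y(s)-\bar Y(s),\tilde f(\bar Y(s))\]$ is handled by Lemma \ref{YYbar} combined with the Young inequality, exactly as in (\ref{intYYbar}). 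Assembling these estimates yields
\[
\sup_{0\leq s\leq t}\E|Y(s\we\rho_R)|^{2p}\ \leq\ \E|Y(0)|^{2p}+Kt+K\int_0^t\sup_{0\leq u\leq s}\E|Y(u\we\rho_R)|^{2p}\,ds,
\]
and Gronwall's inequality delivers $\E|Y(T\we\rho_R)|^{2p}\leq K(1+\E|Y(0)|^{2p})$. Combining with the Chebyshev step above gives $\PP(\rho_R\leq T)\leq K/R^{2p}$.

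The main technical obstacle, as in Lemma \ref{TMbound}, is to confirm that every constant generated along the way is genuinely independent of $\Delta$; this is exactly where the normalisation $\Delta^{1/4}h(\Delta)\leq 1$ in (\ref{deltahdelta}) is indispensable, since it produces $(h(\Delta))^4\Delta\leq 1$ and $\Delta^{p/2}(h(\Delta))^{2p}\leq 1$ at the key moments where the Milstein correction and the $Y-\bar Y$ discrepancy would otherwise blow up. Everything else is a transcription of the proof of Lemma \ref{TMbound} with $t$ replaced by $t\we\rho_R$, and no new estimates beyond Lemmas \ref{YYbar} and \ref{TMbound} are required.
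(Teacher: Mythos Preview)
Your proposal is correct and follows exactly the route the paper intends: the paper merely says ``The proof is similar to that of Lemma \ref{tauest}'', which in turn replaces $t$ by the stopped time in the moment-bound calculation and then applies Chebyshev, and you have spelled out precisely those steps (It\^o formula up to $t\wedge\rho_R$, (\ref{truncatedKhasminskiiCondext}), Lemma \ref{YYbar}, (\ref{deltahdelta}), Gronwall, then Markov). No additional ingredients are needed.
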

The proof is similar to that of Lemma \ref{tauest}.
\par
We now present our main theorem.

\begin{theorem}\label{thmmain}
Let Assumptions \ref{fgpoly}, \ref{KhasminskiiCond} and (\ref{dfgspoly}) hold.  Furthermore, assume that for any given $p \geq 1$, there exists a $q \in (p, +\infty)$ and a $\Delta^*$ satisfying (\ref{deltahdelta}). In addition, if \begin{equation}\label{condonhd}
h(\Delta) \geq \mu \left( \left(\Delta^p (h(\Delta))^{2p} \right)^{-1/(q-p)}\right)
\end{equation}
holds for all sufficiently small $\Delta \in (0,\Delta^*]$, then for any fixed $T = N\Delta > 0$ and sufficiently small $\Delta \in (0,\Delta^*]$,
\begin{equation}\label{strongerror}
\E \left| x(T) - Y_N \right|^{2p} \leq K  \Delta^{2p}(h(\Delta))^{4p}
\end{equation}
holds, where $K$ is a positive constant independent of $\Delta$.
\end{theorem}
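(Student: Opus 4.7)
The plan is to localise the error via the stopping time $\theta=\tau_R\wedge\rho_R$ (for a parameter $R$ yet to be chosen) and split
\begin{equation*}
\E|x(T)-Y(T)|^{2p} \;\le\; \E|e(T\wedge\theta)|^{2p} \;+\; \E\bigl[|e(T)|^{2p}\,I_{\{\theta\le T\}}\bigr],
\end{equation*}
noting that $Y_N=Y(T)$ since $T=N\Delta$ is a grid point. On $\{\theta>T\}$ the paths stay in the ball of radius $R$, so I want to invoke Theorem \ref{thmthetageqT}; for the tail I will combine H\"older's inequality with the moment bounds of Lemmas \ref{TMbound} and \ref{mblem} and the tail estimates of Lemmas \ref{tauest} and \ref{rhoest}. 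The choice of $R$ as a function of $\Delta$ is what couples the two pieces, and condition (\ref{condonhd}) is exactly what makes this coupling possible.

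For the first piece, provided $\mu^{-1}(h(\Delta))\ge R$, Theorem \ref{thmthetageqT} immediately yields $\E|e(T\wedge\theta)|^{2p}\le C\Delta^{2p}(h(\Delta))^{4p}$. For the tail piece, apply H\"older with conjugate exponents $q/p$ and $q/(q-p)$ to obtain
\begin{equation*}
\E\bigl[|e(T)|^{2p}\,I_{\{\theta\le T\}}\bigr] \;\le\; \bigl(\E|e(T)|^{2q}\bigr)^{p/q}\,\bigl(\PP(\theta\le T)\bigr)^{(q-p)/q}.
\end{equation*}
The first factor is bounded by a constant via the elementary inequality, Lemma \ref{mblem} (applied at exponent $2q$) and Lemma \ref{TMbound} (applied at $2q$), uniformly in $\Delta\in(0,\Delta^*]$. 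For the probability I use Lemmas \ref{tauest} and \ref{rhoest} at exponent $2q$ together with $\PP(\theta\le T)\le \PP(\tau_R\le T)+\PP(\rho_R\le T)$, yielding $\PP(\theta\le T)\le K/R^{2q}$. Hence the tail piece is bounded by $C R^{-2(q-p)}$.

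To finish I set $R=\bigl(\Delta^{p}(h(\Delta))^{2p}\bigr)^{-1/(q-p)}$, which makes the tail contribution $C\Delta^{2p}(h(\Delta))^{4p}$, matching the bound from the localised piece. The compatibility requirement $\mu^{-1}(h(\Delta))\ge R$ translates into $h(\Delta)\ge \mu(R)$, which is precisely hypothesis (\ref{condonhd}); hence for all sufficiently small $\Delta$ both estimates apply simultaneously, and adding them gives the assertion (\ref{strongerror}). The main obstacle is the bookkeeping around the tail estimate: one must keep the exponents $p$ and $q$ straight so that the H\"older/tail trade-off lands exactly on the target rate $\Delta^{2p}(h(\Delta))^{4p}$ and so that (\ref{condonhd}) emerges naturally. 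Once $R$ is calibrated this way, the rest is assembling the already-proved ingredients.
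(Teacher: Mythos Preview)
Your proposal is correct and follows essentially the same route as the paper: the same localisation via $\theta=\tau_R\wedge\rho_R$, the same moment and tail bounds, the same choice $R=(\Delta^p(h(\Delta))^{2p})^{-1/(q-p)}$, and the same use of (\ref{condonhd}) to feed Theorem \ref{thmthetageqT}. The only cosmetic difference is that the paper handles the tail term with a Young inequality carrying an auxiliary parameter $\delta=\Delta^{2p}(h(\Delta))^{4p}$, whereas you use H\"older with exponents $q/p$ and $q/(q-p)$; both yield the identical bound $C\Delta^{2p}(h(\Delta))^{4p}$ for the tail contribution.
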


\begin{proof}
We separate the left hand side of \eqref{strongerror} into two parts
\begin{equation}\label{error2parts}
\E \left| x(T) - Y_N \right|^{2p} = \E \left( \left| x(T) - Y_N \right|^{2p} I_{\{\theta > T\}} \right) +  \E \left( \left| x(T) - Y_N \right|^{2p} I_{\{\theta \leq T\}} \right).
\end{equation}
Let us first consider the second term on the right hand side. Fix any $p \in [1, +\infty)$.  Using the Young inequality that
	\begin{equation*}
	a^{2p} b = \left(\delta a^{2q} \right)^{p/q} \left( \frac{b^{q/(q-p)}}{\delta^{p/(q-p)}}\right)^{(q-p)/q} \leq \frac{p \delta}{q} a^{2q} + \frac{q-p}{q\delta^{p/(q - p)}} b^{q/(q-p)}
	\end{equation*}
	for any $\delta >0$,
	we can have
	\begin{equation}\label{xmYpPtheta}
	\E \left( \left| x(T) - Y_N \right|^{2p} I_{\{\theta \leq T\}} \right) \leq \frac{p \delta}{q} \E \left( \left| x(T) - Y_N \right|^{2q}\right) + \frac{q-p}{q\delta^{p/(q - p)}} \PP \left( \theta \leq T \right).
	\end{equation}
Applying (\ref{sdepthmoment}) and Lemma \ref{TMbound}, we see
\begin{equation}\label{xmYpart}
\E \left( \left| x(T) - Y_N \right|^{2q}\right) \leq 2^{2q-1} \E \left( \left|x(T)\right|^{2q} + \left|Y_N \right|^{2q} \right) \leq C,
\end{equation}
where C is a positive constant independent of $R$ and $\Delta$.	
By  Lemmas \ref{tauest} and \ref{rhoest},  we also have
\begin{equation}\label{Pthetapart}
\PP \left( \theta \leq T \right) \leq \PP \left( \tau_{R} \leq T \right) +\PP \left( \rho_{R} \leq T \right) \leq \frac{2K}{R^{2q}}.
\end{equation}
Substituting (\ref{xmYpart}) and (\ref{Pthetapart}) into (\ref{xmYpPtheta}) yields
\begin{equation*}
\E \left( \left| x(T) - Y_N \right|^{2p} I_{\{\theta \leq T\}} \right) \leq \frac{Cp \delta}{q} + \frac{2K(q-p)}{qR^{2q}\delta^{p/(q - p)}}.
\end{equation*}
Choosing
\begin{equation*}
\delta = \Delta^{2p}(h(\Delta))^{4p}~~~\text{and}~~~R=\left(\Delta^p (h(\Delta))^{2p} \right)^{-1/(q-p)},
\end{equation*}
we have
\begin{equation} \label{3.31}
\E \left( \left| x(T) - Y_N \right|^{2p} I_{\{\theta \leq T\}} \right) \leq \Delta^{2p}(h(\Delta))^{4p} \left( \frac{Cp}{q} \ve \frac{2K(q-p)}{q} \right).
\end{equation}
Due to (\ref{condonhd}), we observe
\begin{equation*}
\mu^{-1} \left( h(\Delta) \right) \geq  \left(\Delta^p (h(\Delta))^{2p} \right)^{-1/(q-p)} = R
\end{equation*}
for any  $\Delta \in (0,\Delta^*)$.
Applying Theorem \ref{thmthetageqT} to the first term on the right hand side of (\ref{error2parts}) completes the proof.             \eproof
\end{proof}

Let us close this section by the following remark.
\begin{rmk}\label{R3.8}
In this paper, our conditions are imposed for
every $p\ge 1$ as we wish to show the strong $L^{2p}$-convergence rate for every $p\ge 1$.
However, our theory can also be applied to the case of \emph{some} $p\ge 1$.
For example, assume that the conditions in Theorem \ref{thmmain} hold for some  $\bar p \geq 1$  and (\ref{condonhd}) is replaced by that for the given $\bar p$, there exists a $\bar q \in (\bar p, +\infty)$ such that
\begin{equation*}
h(\Delta) \geq \mu \left( \left(\Delta^{\bar p} (h(\Delta))^{2\bar p} \right)^{-1/(\bar q- \bar p)}\right)
\end{equation*}
holds for all sufficiently small $\Delta \in (0,\Delta^*]$,  then our proof above shows clearly that for all sufficiently small $\Delta \in (0,\Delta^*]$ and for any fixed $T = N\Delta > 0$,
\begin{equation*}
\E \left| x(T) - Y_N \right|^{2\bar p} \leq K  \Delta^{2\bar p}(h(\Delta))^{4\bar p}.
\end{equation*}
\end{rmk}

\section{An Example and Further Discussion}\label{secexpl}

After the theoretical discussion on the truncated Milstein method, it is time to explain how to apply the method.   One may note from Section \ref{mathpre} that the choices of functions $\mu(u)$ and $h(\Delta)$ are essential in order to use the method.  The forms of these two functions are highly related to the structures of the drift and diffusion coefficients $f$ and $g$ of the SDE (\ref{sde}).
We shall illustrate the theory as well as  how to choose $\mu(u)$ and $h(\Delta)$ by the following example.

\begin{expl} {\rm
Consider the scalar SDE
\begin{equation*}
dx(t) = (x(t) - x^5(t))dt + x^2(t)dB(t),~~~t\geq0,
\end{equation*}
with the initial value $x(0)=1$.   The drift and diffusion coefficients are $f(x) =x-x^5$ and $g(x) =  x^2$, respectively. Clearly, both of them have continuous second-order derivatives and it is not hard to verify that Assumption \ref{fgpoly} and (\ref{dfgspoly}) are satisfied with $r=4$.  Moreover,  for any $x,y\in \RR$ and any $p \geq 1$,  we have
\begin{equation*}
\begin{split}
&~~~~(x - y) (f(x) - f(y)) + (2p-1) |g(x) - g(y)|^2 \\
&= (x-y)\big[(x-y)-(x-y)(x^4+x^3y+x^2y^2+xy^3+y^4)\big]+(2p-1)(x+y)^2(x-y)^2 \\
&= \big[1-(x^4+x^3y+x^2y^2+xy^3+y^4)+(2p-1)(x+y)^2\big]|x-y|^2.
\end{split}
\end{equation*}
But
\begin{align*}
-(x^3y+xy^3) = -xy (x^2+y^2) \le 0.5(x^2+y^2)^2 =0.5(x^4+x^4) + x^2y^2.
\end{align*}
Hence
\begin{align*}
& (x - y) (f(x) - f(y)) + (2p-1) |g(x) - g(y)|^2 \\
\le & \big[1-0.5(x^4+y^4)+2(2p-1)(x^2+y^2)\big]|x-y|^2  \\
\le & \big[1+ 4(2p-1)^2\big]|x-y|^2.
\end{align*}
That is to say, Assumption \ref{KhasminskiiCond} is fulfilled.
\par
It is clear to see
\begin{equation*}
\sup_{|x|\leq u} \left( |f(x)| \ve |g(x)| \ve |g'(x)| \right) \leq u^5, ~~\forall u \geq 2.
\end{equation*}
So we choose $\mu(u) = u^5$. Then its inverse function is $\mu^{-1}(u) = u^{1/5}$. For $\epsilon \in (0, 1/4]$, we define $h(\Delta) = \Delta^{-\epsilon}$ for $\Delta > 0$.  Due to the requirement $h(\Delta^*) \geq \mu(1)$, we can choose $\D^*=1$. Hence, (\ref{deltahdelta}) is satisfied.  To make (\ref{condonhd}) to hold, we need
\begin{equation*}
\Delta^{-\epsilon} \geq \left( \left( \Delta^{p-2p\epsilon} \right)^{-1/(q-p)} \right)^5
\end{equation*}
to hold for each $p \ge 1$. That is to say, we require
\begin{equation*}
\left( \frac{10p}{q-p} + 1 \right)\epsilon \ge \frac{5p}{q-p}.
\end{equation*}
In fact, for any given $p \ge 1$ and any small $\epsilon > 0$, we can always choose sufficiently large $q$ to make the inequality above to hold. Therefore, by Theorem \ref{thmmain} we can  conclude
\begin{equation} \label{4.1}
\E \left| x(T) - Y(T) \right|^{2p} \leq K\Delta^{2p(1-\epsilon)}, \quad \forall \D \in (0, 1].
\end{equation}
That is, the strong $L^{2p}$-convergence rate is close to $2p$ (or $L^1$-convergence rate is close to 1).

In the computer simulations, we choose $\e=0.1$ and regard the numerical solution with the step size of $2^{-16}$ as the true solution. In Figure 1, we plot the strong errors (i.e., in $L^1$) of the truncated Milstein method with step sizes $2^{-13}$, $2^{-12}$, $2^{-11}$ and $2^{-10}$, respectively.
\begin{figure}
  \centering
  \includegraphics[scale=0.8]{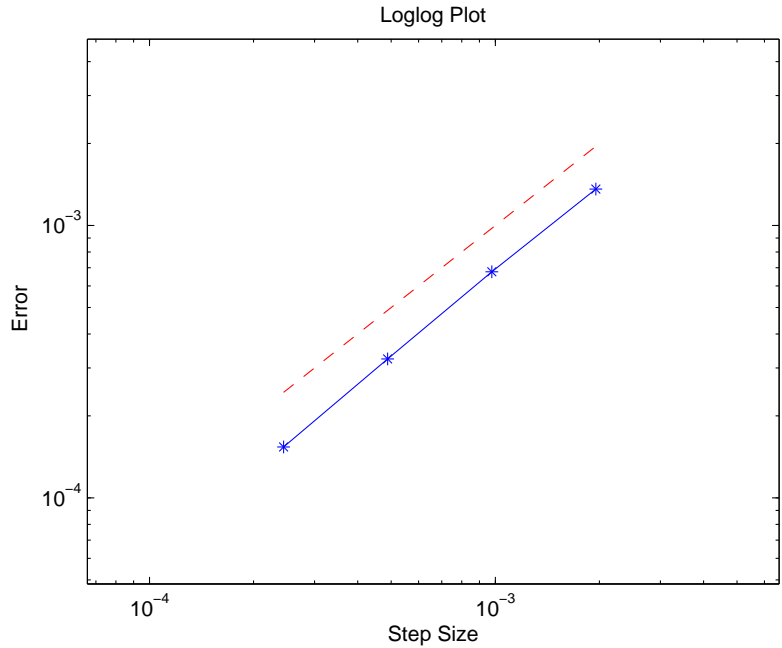}
  \label{strplt}
   \caption{The strong convergence order at the terminal time $T=2$. The red dashed line is the reference line with the slope of 1.}
\end{figure}
\par
It is interesting to observe from Figure 1 that the strong convergence rate is quite close to one, although we choose $\e=0.1$ and
the theoretical result (\ref{4.1}) only shows the rate of $0.9$.   This observation indicates that our theoretical result is somehow conservative.
}\end{expl}
We also observe from Theorem \ref{thmmain} that the strong convergence rate is highly dependent on the choices of the functions,
$\mu(\cdot)$ and $h(\cdot)$.  Although we have demonstrated in the example above how to choose them, the example itself has already indicated that those choices may not be optimal.
\par
Moreover, the functions $\mu(\cdot)$ and $h(\cdot)$ are  used to set up the truncating barrier
$\mu^{-1}(h(\D))$. Once the step size is decided, the barrier is set for all states and the whole time interval.  To be more efficient, it may be worth to design a current-state-dependent truncating barrier, which then may end up with a numerical method with variable step size. We have been working on this new method and will report it later on.

\section*{Acknowledgements}
The authors would like to thank all the referees and the editor for the very useful comments and suggestions, which have helped to improve the paper a lot.
\par
The authors would like to thank
Shanghai Pujiang Program (16PJ1408000),
¡°Chenguang Program¡± supported by Shanghai Education Development Foundation and Shanghai Municipal Education Commission (16CG50),
the Natural Science Fund of Shanghai Normal University (SK201603),
Young Scholar Training Program of Shanghai's Universities,
the EPSRC (EP/K503174/1),  
the Leverhulme Trust (RF-2015-385),
the Royal Society (Wolfson Research Merit Award WM160014),
the Natural Science Foundation of China (11471216),
the Natural Science Foundation of Shanghai (14ZR1431300),
E-Institutes of Shanghai Municipal Education Commission (No. E03004)
and
the Ministry of Education (MOE) of China (MS2014DHDX020),
for their financial support.

\end{document}